\DeclareFontShape{T1}{lmr}{bx}{sc} { <-> ssub * cmr/bx/sc }{}
\pgfplotsset{colormap name=viridis,compat=newest}
\numberwithin{equation}{section}
\setlist[enumerate]{label=(\roman*)}
\theoremstyle{plain}
\newtheorem{theorem}{Theorem}[section]
\newtheorem{proposition}[theorem]{Proposition}
\newtheorem{lemma}[theorem]{Lemma}
\newtheorem{remark}[theorem]{Remark}
\newtheorem{assumption}[theorem]{Assumption}
\Crefname{assumption}{Assumption}{Assumptions}
\newtheorem{example}[theorem]{Example}
\newcommand{\N}{\ensuremath\mathbb{N}}
\newcommand{\Z}{\ensuremath\mathbb{Z}}
\newcommand{\R}{\ensuremath\mathbb{R}}
\newcommand{\spd}[1]{\mathbb{S}_{\succ}^{#1}}
\newcommand{\T}{\ensuremath\mathsf{T}}
\newcommand{\ds}{\,\mathrm{d}s}
\newcommand{\dzeta}{\,\mathrm{d}\zeta}
\newcommand{\ddx}{\ensuremath{\frac{\mathrm{d}}{\mathrm{d}x}}}
\newcommand{\ddt}{\ensuremath{\frac{\mathrm{d}}{\mathrm{d}t}}}
\newcommand{\tddt}{\ensuremath{\tfrac{\mathrm{d}}{\mathrm{d}t}}}
\newcommand{\hook}{\ensuremath{\hookrightarrow}}
\DeclareMathOperator{\id}{id}
\newcommand{\calC}{\mathcal{C}}
\newcommand{\calM}{\mathcal{M}}
\newcommand{\calX}{\mathcal{X}}
\newcommand{\cA}{\ensuremath{\mathcal{A}}}
\newcommand{\cB}{\ensuremath{\mathcal{B}}}
\newcommand{\cC}{\ensuremath{\mathcal{C}}}
\newcommand{\cD}{\ensuremath{\mathcal{D}}}
\newcommand{\cH}{\ensuremath{\mathcal{H}}}
\newcommand{\cM}{\ensuremath{\mathcal{M}}}
\newcommand{\cO}{\ensuremath{\mathcal{O}}}
\newcommand{\cQ}{\ensuremath{\mathcal{Q}}}
\newcommand{\cV}{\ensuremath{\mathcal{V}}}
\newcommand{\cX}{\ensuremath{\mathcal{X}}}
\newcommand{\cHV}{\ensuremath{\cH_{\scalebox{.5}{\cV}}}}
\newcommand{\cHQ}{\ensuremath{\cH_{\scalebox{.5}{\cQ}}}}
\newcommand{\cHQdual}{\ensuremath{\cH^*_{\scalebox{.5}{\cQ}}}}
\newcommand{\lineWidth}{1.2pt}
\definecolor{color0}{rgb}{1.0, 0.0, 0.0}
\definecolor{color1}{rgb}{0.0, 0.0, 1.0}
\definecolor{color2}{rgb}{0.1, 0.2, 0.9}
\definecolor{color3}{rgb}{0.9, 0.2, 0.1}
\definecolor{color4}{rgb}{0.8, 0.2, 0.8}
\definecolor{color5}{rgb}{0,0,0}
\definecolor{rabred}{rgb}{1.0, 0.0, 0.0}
\definecolor{rabblue}{rgb}{0.0, 0.0, 1.0}
\definecolor{rabcol}{rgb}{0.1, 0.2, 0.9}
\definecolor{crimson2143940}{RGB}{214,39,40}
\definecolor{darkgray176}{RGB}{176,176,176}
\definecolor{darkorange25512714}{RGB}{255,127,14}
\definecolor{forestgreen4416044}{RGB}{44,160,44}
\definecolor{gray127}{RGB}{127,127,127}
\definecolor{mediumpurple148103189}{RGB}{148,103,189}
\definecolor{orchid227119194}{RGB}{227,119,194}
\definecolor{sienna1408675}{RGB}{140,86,75}
\definecolor{steelblue31119180}{RGB}{31,119,180}
\definecolor{cbsblue}{RGB}{68,119,170}
\definecolor{cbscyan}{RGB}{102,204,238}
\definecolor{cbsgreen}{RGB}{34,136,51}
\definecolor{cbsyellow}{RGB}{204,187,68}
\definecolor{cbsred}{RGB}{238,102,119}
\definecolor{cbspurple}{RGB}{170,51,119}
\definecolor{cbsgrey}{RGB}{187,187,187}
\definecolor{mycolor1}{rgb}{0.00000,0.44700,0.74100}
\definecolor{mycolor2}{rgb}{0.85000,0.32500,0.09800}
\definecolor{mycolor3}{rgb}{0.92900,0.69400,0.12500}
\definecolor{mycolor4}{rgb}{0.46600,0.67400,0.18800}
\definecolor{mycolor5}{rgb}{0.49400,0.18400,0.55600}
\definecolor{mycolor6}{rgb}{0.16000,0.49000,0.49000}%
\definecolor{mycolor7}{rgb}{0.10000,0.49000,0.80000}%
\definecolor{mycolor8}{rgb}{0.06640,0.46484,0.19921} 
\definecolor{mycolor9}{rgb}{0.59765,0.59765,0.19921} 
\definecolor{corrColor}{RGB}{60,124,155}
\newcommand{\abbr}[1]{#1\xspace}
\newcommand{\BDF}{\abbr{BDF}}
\newcommand{\DDE}{\abbr{DDE}}
\newcommand{\DDEs}{\abbr{DDEs}}
\newcommand{\shift}[2]{\Delta_{#1}{#2}}
\newcommand{\bdf}[2]{\Xi_{#1}{(#2)}}
\newcommand{\delayOperKdisc}[1]{\Psi_{#1}} 
\newcommand{\numDelay}{\delta}
\newcommand{\numBDF}{k}
\title[Decoupling multistep schemes for elliptic--parabolic problems]{Decoupling multistep schemes for\\ elliptic--parabolic problems}
\author{Robert Altmann${}^\dagger$ \and Abdullah Mujahid${}^{\star}$ \and Benjamin Unger${}^\star$}
\address{${}^{\dagger}$ Institute of Analysis and Numerics, Otto von Guericke University Magdeburg, Universit\"atsplatz 2, 39106 Magdeburg, Germany}
\email{robert.altmann@ovgu.de}
\address{${}^{\star}$ Stuttgart Center for Simulation Science (SC SimTech), University of Stuttgart, Universit\"{a}tsstr.~32, 70569 Stuttgart, Germany}
\email{\{abdullah.mujahid,benjamin.unger\}@simtech.uni-stuttgart.de}
\date{\today}
\begin{document}

\begin{abstract} 
We study the construction and convergence of decoupling multistep schemes of higher order using the backward differentiation formulae for an elliptic--parabolic problem, which includes multiple-network poroelasticity as a special case. These schemes were first introduced in [Altmann, Maier, Unger, BIT Numer.~Math., 64:20, 2024], where a convergence proof for the second-order case is presented. Here, we present a slightly modified version of these schemes using a different construction of related time delay systems. We present a novel convergence proof relying on concepts from G-stability applicable for any order and providing a sharper characterization of the required weak coupling condition. The key tool for the convergence analysis is the construction of a weighted norm enabling a telescoping argument for the sum of the errors. 
\end{abstract}

\maketitle
{\footnotesize \textsc{Keywords:} poroelasticity, decoupling, higher-order discretization, backward differentiation formulae}

{\footnotesize \textsc{AMS subject classification:} 65M12, 65J10, 76S05}


\section{Introduction}
\label{sec:intro}

This paper explores semi-explicit time integration schemes of higher order for the equations of poroelasticity\cite{Bio41} or, more generally, for linear elliptic--parabolic problems. Among important applications of this problem are biomechanics, where the human brain and heart are modeled as poroelastic medium with multiple fluid networks~\cite{VarCT+16,EliRT23}, and geomechanics, where poroelasticity serves as a model problem~\cite{Zob10}.

The well-posedness of the considered elliptic--parabolic problem is studied in~\cite{Sho00}. To reduce the computational effort resulting from the coupling, several decoupling strategies exist~\cite{MikW13,KimTJ11a,KimTJ11b,StoBKNR19,BotBNKR17}, most of which are based on a fixed-point iteration. However, higher-order methods in the iterative framework need many inner iteration steps and hence call for a stronger contraction\cite{AltMU24b}.

Alternatively, a semi-explicit method of first order, as proposed and analyzed in~\cite{AltMU21b}, decouples the system whenever a weak coupling condition is satisfied. The design of the semi-explicit method is based on constructing a related partial differential equation with a time delay, approximating the solution of the original system up to a desired order. An extension to second order is considered in~\cite{AltMU24} and to nonlinear problems in~\cite{AltM22}. Unlike the step size restriction in explicit methods, semi-explicit methods require a weak coupling condition, which restricts the problem class given by the material parameters. To lift this restriction, schemes with a fixed number of relaxation steps are introduced in~\cite{AltD24,AltD24ppt}.

In this article, we provide closed-form expressions for the construction of
the delay systems and develop the necessary tools to show the convergence of the semi-explicit schemes using the \emph{backward differentiation formulae} (\BDF). In the standard analysis of \BDF-$k$ methods, one can construct a weighted norm with a symmetric positive definite $k\times k$ matrix $G$. The weighting allows for setting up a telescoping sum of the errors and, hence, is the key tool for the convergence analysis. Moreover, it exploits the equivalence between A--stability and G--stability~\cite{Dah78}; see also~\cite[Ch.~V.~6]{HaiW96}.
For methods that are not A--stable, one constructs additional multipliers~\cite{NevO81}; see also~\cite[Ch.~V.~8]{HaiW96}.
In practice, explicitly constructing the $G$ matrix is often tricky. Hence, the classical way is to prove that the method is A--stable (potentially with appropriate multipliers) and then infer the existence of $G$. In this paper, we proceed differently and explicitly construct the weighted norm. Our main results are:
\begin{enumerate}
	\item We present a novel way to construct the related delay equation of a given order, which, in contrast to the approach presented in \cite{AltMU24}, is explicit and requires fewer delays. 
	\item We use the norm defined through $G$ (assuming that it exists) to present a novel convergence proof that works for any \BDF order and provides a sharper characterization of the weak coupling condition compared to~\cite{AltMU24}.
	\item We explicitly construct the weighting matrix~$G$ for convergence up to order three.
\end{enumerate}

The remainder of this paper is organized as follows. After this introduction, the abstract model problem is introduced in~\Cref{sec:prelim} with particular examples from poroelasticity.
The design of semi-explicit methods via delay systems and their well-posedness is discussed in~\Cref{sec:prelim:delay}. The main result of this article is \Cref{thm:BDFdelay}, where the convergence analysis of the decoupling multistep methods based on \BDF schemes is given. The tool to prove higher-order convergence is \Cref{ass:summation:bdf}, which is the basis for the telescoping argument used in the proof.
The evidence for this assumption is discussed thoroughly in~\Cref{sec:summation:bdf}. Finally, we verify the theoretical results numerically in~\Cref{sec:numerics}.
%
%
\subsection*{Notation}
Throughout the paper, we write~$a \lesssim b$ to indicate that there exists a
generic constant~$C > 0$, independent of spatial and temporal discretization
parameters, such that~$a \leq C b$.
Moreover, we denote the Bochner spaces on the time interval~$[0,T]$ for a
Banach space~$\calX$ by~$L^p(0,T;\calX)$
and $W^{k,p}(0,T;\calX)$, where $p\ge 1$, $k\in\N$.
The $L^2$ norm is denoted by $\Vert\cdot\Vert$. 
%
%
\section{Abstract Formulation and Examples}
\label{sec:prelim}
Consider the Hilbert spaces
\begin{equation*}
	\cV\vcentcolon=[H_{0}^{1}(\Omega)]^m, \qquad
	\cQ\vcentcolon=H_{0}^{1}(\Omega), \qquad
	\cHV\vcentcolon=[L^{2}(\Omega)]^m, \qquad 
	\cHQ\vcentcolon=L^{2}(\Omega),
\end{equation*}
resulting in the Gelfand triples $\cV \hook \cHV\simeq\cHV^{*}\hook\cV^{*}$
and $\cQ \hook \cHQ\simeq\cHQ^{*}\hook\cQ^{*}$, cf.~\cite[Sec.~23.4]{Zei90}. 
The system of interest is the following abstract coupled elliptic--parabolic
problem:
given sufficiently smooth source terms $f\colon[0,T] \to \cV^*$
and $g\colon[0,T] \to \cQ^*$, seek abstract functions
$u\colon [0,T]\rightarrow\cV$ and $p\colon [0,T]\rightarrow\cQ$ such that for 
almost every~$t \in (0,T]$ it holds that
\begin{subequations}
	\label{eq:ellpar}
	\begin{align}
		a(u,v) - d(v, p) 
		&= \langle f, v \rangle, \label{eq:ellpar:a} \\
		d(\dot u, q) + c(\dot p,q) + b(p,q) 
		&= \langle g, q\rangle \label{eq:ellpar:b} 
	\end{align}
	for all test functions~$v\in \cV$, $q \in \cQ$. 
	As initial data, we assume 
	\begin{align}
		u(0) = u^0 \in \cV, \qquad 
		p(0) = p^0 \in \cHQ \label{eq:ellpar:c} 
	\end{align}
\end{subequations}
to be \emph{consistent}, i.e.,
we assume $a(u^0,v) - d(v, p^0) = \langle f(0),v\rangle$ for all~$v\in \cV$. 
The bilinear forms~$a\colon \cV\times\cV \to \R$,
$b\colon \cQ\times\cQ\to\R$, and $c\colon \cHQ\times\cHQ\to\R$
are assumed to be symmetric, continuous, and elliptic in their respective spaces. 
The corresponding ellipticity and continuity constants of $\mathfrak{a}\in\{a, b, c\}$ are denoted by
$c_{\mathfrak{a}}, C_{\mathfrak{a}} > 0$, respectively.
For notational convenience we introduce the 
$b$-norm $\|\cdot\|_{b} \vcentcolon= b(\cdot,\cdot)^{1/2}$ and the
$c$-norm $\|\cdot\|_{c} \vcentcolon= c(\cdot,\cdot)^{1/2}$ satisfying
\begin{align*}
	\tfrac{1}{C_b}\Vert \cdot \Vert^{2}_{b}
	\le \Vert \cdot \Vert_{\cQ}^{2}
	\le \tfrac{1}{c_b}\Vert \cdot \Vert^{2}_{b}
	\quad \text{and} \quad
	\tfrac{1}{C_c}\Vert \cdot \Vert^{2}_{c}
	\le \Vert \cdot \Vert_{\cHQ}^{2}
	\le \tfrac{1}{c_c}\Vert \cdot \Vert^{2}_{c},
\end{align*}
respectively.
Furthermore, $d\colon\cV\times\cHQ\to\R$ is bounded,
i.e., we assume that there exists a positive constant~$C_d > 0$ such
that~$d(u,p) \leq C_d \Vert u \Vert_{\cV} \Vert p\Vert_{\cHQ}$ for
all~$u \in \cV$, $p\in \cHQ$.  
Without the coupling term represented by the bilinear form $d$,
equation~\eqref{eq:ellpar:a} is elliptic in $u$ and equation~\eqref{eq:ellpar:b}
is parabolic in $p$.
This explains why system~\eqref{eq:ellpar} is called an elliptic--parabolic problem. 
At this point, we would like to emphasize that $c(\dot p,q)$ -- and similarly $d(\dot{u},q)$ -- should be understood as $\ddt c(p,q)$, see also \cite{Muj22}.
\begin{example}[linear poroelasticity]
Let~$\Omega \subseteq \R^m$, $m\in\{2,3\}$, be a bounded Lipschitz domain and $[0,T]$ a time interval with $T>0$. 
We consider the quasi-static Biot poroelasticity model with homogeneous Dirichlet boundary conditions as introduced in~\cite{Bio41}, 
where one seeks the deformation~$u\colon [0,T]\times\Omega\rightarrow\R^{m}$
and the pressure~$p\colon [0,T]\times\Omega\rightarrow\R$ satisfying
\begin{subequations}
	\label{eq:pdes}
	\begin{align}
		- \nabla\cdot\sigma(u) + \alpha \nabla p 
		&= {\hat f} \qquad\text{in }(0,T]\times\Omega, \label{eq:pdes:a}\\
		\partial_{t} \big(\alpha \nabla\cdot u + \tfrac{1}{M} p\big)
		-\nabla\cdot(\kappa\nabla p)
		&= {\hat g} \qquad\text{in } (0,T]\times\Omega. \label{eq:pdes:b}
	\end{align}
\end{subequations}
Here, $\sigma$ denotes the stress tensor
\begin{equation*}
	\sigma(u) = {\mu}\, \big(\nabla u + (\nabla u)^\T \big) 
	+ {\lambda}\, (\nabla \cdot u) \id
\end{equation*} 
with Lam\'{e} coefficients ${\lambda}$ and ${\mu}$.
The permeability is denoted with $\kappa$.
Moreover, $\alpha$ denotes the Biot--Willis fluid--solid coupling coefficient
and $M$ the Biot modulus.
The right-hand sides $\hat{f}$ and $\hat{g}$ are
the volumetric load and the fluid source terms, respectively,
modeling an injection or production process. 
We refer to \cite{AltMU21b} for the definition of the corresponding bilinear forms resulting in~\eqref{eq:ellpar}.  
\end{example}
\begin{example}[multiple network poroelasticity]
A generalization of the Biot poroelasticity model
is given by the multiple network poroelasticity theory, where we seek the
deformation~$u\colon [0,T]\times\Omega\rightarrow\R^{m}$ and pressure
variables~$p_{i}\colon [0,T]\times\Omega\rightarrow\R$ for $i = 1, \dots, J$
where $J \in \N$ denotes the number of fluid networks,
satisfying the coupled system 
\begin{align*}
	- \nabla\cdot\sigma(u) + \sum_{i=1}^{J}\alpha_i \nabla p_i 
	&= {\hat f} \qquad\text{in } (0,T]\times\Omega,\\
	\partial_{t} \Big(\alpha_i \nabla\cdot u
	+ \tfrac{1}{M_i} p_{i} \Big) - \nabla\cdot \big(\kappa_{i}\nabla p_i \big)
	+ \sum_{j\ne i}^{J}\beta_{ij}(p_i - p_j)
	&= {\hat g}_{i} \qquad\text{in } (0,T]\times\Omega.
\end{align*}
Given some additional assumptions on the exchange rates between the fluid networks $\beta_{ij}$, this model has a similar structure as~\eqref{eq:ellpar}. 
In practice, the exchange rates are usually symmetric and sufficiently small such that the ellipticity property of the bilinear form $b$ is satisfied~\cite{AltMU21c}. 
\end{example}
To describe the stability of the decoupling schemes introduced in this article, we define
\begin{equation}
	\label{eqn:couplingStrength}
	\omega\vcentcolon=\tfrac{C_d^{2}}{c_a c_c},
\end{equation}
as the \emph{coupling strength} between the elliptic and the parabolic equation. 
This value plays a crucial role for the convergence analysis and depends on the physical coefficients of the application. We refer to \cite[Tab.~4]{DetC93} for concrete values appearing in poroelasticity; see also \cite[Tab.~3.1]{AltMU24}. 

The abstract formulation~\eqref{eq:ellpar} can also be written in terms of operators in the respective dual spaces. For this, let $\cA\colon\cV\rightarrow\cV^*$, $\cB\colon\cQ\rightarrow\cQ^*$, $\cC\colon\cHQ\rightarrow\cHQdual$, 
and $\cD\colon\cV\rightarrow\cHQ^{*}$ be the operators associated with the bilinear forms $a$, $b$, $c$, and $d$ respectively. This then leads to the equivalent formulation 
\begin{subequations}
\label{eq:ellpar:opt}
    \begin{align}
        \cA u - \cD^{*} p 
        &= f \qquad \text{in } \cV^{*}, \label{eq:ellpar:opt:a}\\
        \cD{\dot u} + \cC{\dot p} + \cB p 
        &= g \hspace{0.79cm} \text{in } \cQ^{*}.\label{eq:ellpar:opt:b}
    \end{align}
\end{subequations}
Since $\cA$ is invertible, we can eliminate the variable $u$, leading to the parabolic equation
\begin{align}
	\label{eq:ppde}
   (\cM + \cC)\dot{p} + \cB p &= r,
\end{align}
where $r \vcentcolon= g - \cD\cA^{-1}\dot{f}$ and $\cM \vcentcolon= \cD\cA^{-1}\cD^{*}$ is a self-adjoint and non-negative operator.
%
%
\section{Related Delay Equations}
\label{sec:prelim:delay}

The key idea for the construction as well as convergence analysis of a decoupling time integration scheme in~\cite{AltMU21b} is to introduce an approximation of the coupled elliptic--parabolic problem~\eqref{eq:ellpar:opt} by adding a time delay $\tau>0$ in \eqref{eq:ellpar:opt:a}, where the time delay equals the time step size. In more detail, one considers the delay system 
\begin{subequations}
	\label{eq:ellpar:opt:delay}
	\begin{align}
	\cA \bar{u} - \cD^{*} \widehat{p}(t;\tau) 
	&= f \qquad \text{in } \cV^{*}, \label{eq:ellpar:opt:delay:a}\\
	\cD \dot{\bar{u}} + \cC{\dot {\bar p}} + \cB {\bar p} 
	&= g \qquad \text{in } \cQ^{*}, \label{eq:ellpar:opt:delay:b}
	\end{align}
\end{subequations}
where $\widehat{p}(t;\tau)$ is an approximation of $\bar{p}$ using the time delay $\tau$. In the first-order case studied in \cite{AltMU21b} we have $\widehat{p}(t;\tau) = \shift{\tau}{\bar{p}}(t)$ with the shift operator~$\shift{\tau}{p} \vcentcolon= p(\,\cdot-\tau)$. Let us emphasize that, in contrast to the original system~\eqref{eq:ellpar:opt}, such a delay system calls for a \emph{history function}, i.e., a prescription of $\bar{p}$ for $-\tau\leq t\leq 0$. 

If an implicit time discretization scheme of order $\numBDF$ with time step
size $\tau$ is used to discretize~\eqref{eq:ellpar:opt:delay}, then the
equations~\eqref{eq:ellpar:opt:delay:a} and \eqref{eq:ellpar:opt:delay:b} are
decoupled in the sense that one can first solve \eqref{eq:ellpar:opt:delay:a}
for~$\bar{u}$ and then~\eqref{eq:ellpar:opt:delay:b} for $\bar{p}$.
This was done and analyzed for the implicit Euler method in~\cite{AltMU21b} and for
\BDF-$2$ with a different $\hat p$ in~\cite{AltMU24}.
It goes without saying that a higher-order scheme is only reasonable if the delay
approximation~\eqref{eq:ellpar:opt:delay} is of the same order. 

\subsection{Higher-order approximation via Taylor series expansion}
For a higher-order approximation, a general time delayed approximation of $p(t)$ is
sought.
This means that we consider a delay term~${\widehat p}(t;\tau)$ with 
\begin{displaymath}
	p(t) 
	= {\widehat p}(t;\tau) + \cO(\tau^\numDelay).
\end{displaymath}
One straightforward possibility is to consider a Taylor series expansion of $p(t)$ around $t-\tau$.
This, however, leads to a \emph{delay differential equations} (\DDEs) of advanced type --- see \cite{BelC63} for a precise definition --- as shown in the following example, which is motivated from \cite[Ex.~2.1]{Ung18}. 
\begin{example}
	Consider two terms in the Taylor series, i.e., $\bar{p} \approx \shift{\tau}{{ {\bar p}}} + \tau\shift{\tau}{{{\dot {\bar p}}}}$. Solving~\eqref{eq:ellpar:opt:delay:a} for $\bar{u}$ and substituting this in \eqref{eq:ellpar:opt:delay:b} results the inherent delay differential equation
	\begin{align*}
	\cC {\dot {\bar p}} + \cB {\bar p} 
	= r - \cM(\shift{\tau}{{{\dot {\bar p}}}} + \tau\shift{\tau}{{{\ddot {\bar p}}}}).
	\end{align*}
	If we consider the scalar case where we replace all operators by the constant one and set $f \equiv 0$, $g \equiv 0$, then we obtain~${\dot {\bar y}} + {\bar y} = \shift{\tau}{{{\dot {\bar y}}}} + \tau\shift{\tau}{{{\ddot {\bar y}}}}$. The corresponding first-order formulation 
	\[
		{\dot {\bar y}} 
		= {\bar z}, \qquad
		{\bar z} + {\bar y} 
		= \shift{\tau}{{\bar z}} + \tau\shift{\tau}{{{\dot {\bar z}}}}
	\]
	equals a \DDE of advanced type, since the second equation involves
	$\shift{\tau}{\dot {\bar z}}$ but not $\dot {\bar z}$. 
	To illustrate the inherent difficulties of \DDEs of advances type, we set $\tau = 1$ and consider 
	\begin{displaymath}
	\phi(t) = y^{0} + \tfrac{1}{n}\sin(n\pi t) 
	\end{displaymath}
	for $t \in [-1, 0]$	as history function.
	The solution can be obtained using Bellman's method of
	steps~\cite[Ch.~3.4]{Bel61} and yields
	\begin{align*}
	{\bar y}(t) = 
	\begin{cases}
	y^{0} + \frac{1}{n} \sin(n\pi t), 
	& -1 \le t \le 0\\
	y^{0} \mathrm{e}^{-t} + \pi \cos(n \pi ) \mathrm{e}^{-t} -\pi\cos(n\pi(t-1)), 
	&\phantom{-}0 \le t \le 1\\
	y^{0} \mathrm{e}^{-t} + \pi\cos(n\pi)\mathrm{e}^{-t} - \pi \mathrm{e}^{-t+1} - n\pi^2\sin(n\pi(t-2)), 
	&\phantom{-}1 \le t \le 2\\
	y^{0} \mathrm{e}^{-t} + \pi\cos(n\pi)\mathrm{e}^{-t} - \pi \mathrm{e}^{-t+1} \\ 
	\qquad\qquad+ n^2\pi^2 \cos(n\pi)\mathrm{e}^{-t+2} - n^2\pi^3\cos(n\pi(t-3)), 
	& \phantom{-}2 \le t \le 3
	\end{cases}.
	\end{align*}
	Although the history function is uniformly bounded for all $n$, the solution in
	each successive interval grows with an increasing factor of $n$.
	In this sense, the delay system is ill-posed.
\end{example}
Solving \DDEs of advanced type requires a distributional solution concept even in the finite-dimensional setting~\cite{TreU19,Ung20b}. To overcome this difficulty, we instead aim for a higher-order approximation via multiple delays.

\subsection{Higher-order approximation via multiple delays}
We choose $\widehat{p}(t;\tau)$ in~\eqref{eq:ellpar:opt:delay} as the Lagrange interpolation polynomial of degree $\numDelay-1$ for the interpolation points $s_\ell\vcentcolon=t - \ell\tau$ given by
\begin{equation*}
	\widehat{p}(t;\tau) = \sum_{\ell=1}^{\numDelay}\, \bar{p}(s_\ell) L_\ell(t),
	\qquad\qquad 
	L_\ell(t) 
	= \prod_{j=1,\, j\neq \ell}^\delta \frac{t-s_j}{s_\ell - s_j}.
\end{equation*} 
Substituting the particular form of the interpolation points, we obtain
\begin{displaymath}
	L_\ell(t) \equiv (-1)^{(\ell-1)}\binom{\numDelay}{\ell}
	=\vcentcolon c_{\numDelay,\ell} \quad \ell=1,\dots,\numDelay.
\end{displaymath}
We would like to emphasize that $\bar{p}(s_\ell) = \bar{p}(t-\ell\tau)$ is not a constant but depends on $t$ and~$\tau$,
whereas the polynomial basis loses its dependence on $t$ and are constants, which we call \emph{delay coeffients};
see the examples in \Cref{tab:coeffDelays}.
Moreover, we have $\sum_{\ell=1}^\numDelay c_{\numDelay,\ell} = 1$ and obtain 
\begin{align}
\label{eq:p:approx:lagpol}
	{\widehat p}(t;\tau)\vcentcolon=\sum_{\ell=1}^{\numDelay}c_{\numDelay,\ell}\shift{\ell\tau}{\bar{p}}(t)
\end{align}
as the delay term. The proof that this is indeed an approximation of $p(t)$ of order $\delta$ is given in \Cref{lem:pTaylor}. 
\begin{table}[]
	\caption{Coefficients~$c_{\numDelay,\ell}$, $\ell=1,\dots,\numDelay$, for different values of $\numDelay$.}
	\label{tab:coeffDelays}
	\centering
	\begin{tabular}{l@{\quad}|@{\quad}c@{\quad}c@{\quad}c@{\quad}c}
		\toprule
		$\numDelay$ & $c_{\numDelay,1}$ & $c_{\numDelay,2}$ & $c_{\numDelay,3}$ & $c_{\numDelay,4}$\\	\midrule
		$1$ & $1$ & & &\\
		$2$ & $2$ & $-1$ & &\\
		$3$ & $3$ & $-3$ & $1$ &\\
		$4$ & $4$ & $-6$ & $4$ & $-1$\\
		\bottomrule
	\end{tabular}
\end{table}

\begin{example}
	Following \Cref{tab:coeffDelays}, the delay approximations for $\numDelay=1,2,3$ are given by
	\begin{align*}
		\widehat{p}(\,\cdot\,;\tau) &= \shift{\tau}{\bar{p}}, & 
		\widehat{p}(\,\cdot\,;\tau) &= 2\shift{\tau}{\bar{p}} - \shift{2\tau}{\bar{p}}, &
		\widehat{p}(\,\cdot\,;\tau) &= 3\shift{\tau}{\bar{p}} - 3\shift{2\tau}{\bar{p}} + \shift{3\tau}{\bar{p}},
	\end{align*}
	respectively. 
\end{example}
\begin{remark}
	The recipe in~\cite{AltMU24} to obtain the delay system is to replace the time derivatives of the delayed terms in the Taylor series expansion with finite differences of appropriate order such that the resulting approximation is still of order~$\numDelay$ but of neutral delay type. For $\numDelay\leq 3$, this approach resembles the schemes we obtain here, but for an approximation of order~$\tau^4$ already five delays are required. In contrast, we obtain a closed form expression for the delay term ${\hat p}(\,\cdot\,;\tau)$ with $\numDelay$ delays.
\end{remark}
Substituting the expression for $\widehat{p}(\,\cdot\,;\tau)$ from~\eqref{eq:p:approx:lagpol} in~\eqref{eq:ellpar:opt:delay} yields the system
\begin{subequations}
	\label{eq:ellpar:opt:delayK}
	\begin{align}
	\cA {\bar u} - \cD^{*} \bigg( \sum_{\ell=1}^{\numDelay} c_{\numDelay,\ell} \shift{\ell\tau}{\bar p} \bigg) 
	&= f \qquad \text{in } \cV^{*}, \label{eq:ellpar:opt:delayK:a}\\
	\cD{\dot {\bar u}} + \cC{\dot {\bar p}} + \cB {\bar p} 
	&= g \hspace{0.79cm} \text{in } \cQ^{*}\label{eq:ellpar:opt:delayK:b}
	\end{align}
\end{subequations}
with $\numDelay$ delays. 
For $\numDelay=1$, we recover the already mentioned delay system of order one introduced in \cite{AltMU21b}, and for $\numDelay\in\{2,3\}$ the delayed systems from \cite{AltMU24}.
In~\eqref{eq:ellpar:opt:delayK}, we seek solutions $\bar{u}\colon [0,T]\rightarrow\cV$ and $\bar{p}\colon [-\numDelay\tau,T]\rightarrow\cQ$ given sufficiently smooth right-hand sides, initial data, and a history function $\Phi \in C^{\infty}([-\numDelay\tau,0],\cHQ)$ for $\bar{p}$ that ensures the consistency of initial conditions of the original problem \eqref{eq:ellpar:opt}.
For this, a sufficient condition reads
\begin{align}
	\label{eq:histFunc}
	\Phi(-\ell\tau) = \Phi(0) = p^{0}\qquad \text{ for all } \ell=1, \cdots, \numDelay,
\end{align}
since this implies with~\eqref{eq:ellpar:opt:delayK:a} that $\cA {\bar u}(0) - \cD^{*} p^0 = \cA {\bar u}(0) - \Big( \sum_{\ell=1}^{\numDelay} c_{\numDelay,\ell}\Big) \cD^{*} p^0 = f(0)$ and hence, ${\bar u}(0) = u^0$. 

\begin{proposition}[Distance to delay solution]
\label{prop:distanceDelay}
Assume sufficiently smooth right-hand sides $f, g$, consistent initial data \eqref{eq:ellpar:c}, and a history function $\Phi \in C^{\infty}([-\numDelay\tau,0],\cHQ)$ which satisfies~\eqref{eq:histFunc}.
Then there exists a solution $({\bar u}, {\bar p})$ of the delay problem \eqref{eq:ellpar:opt:delayK} which satisfies ${\bar p} \in W^{\numDelay+1,\infty}(0,T;\cHQ)$. Moreover, the solutions of~\eqref{eq:ellpar:opt} and~\eqref{eq:ellpar:opt:delayK} only differ by a term of order~$\numDelay$, i.e., for almost every $t \in [0,T]$ it holds that
\begin{displaymath}
	\Vert {\bar u}(t) - u(t)\Vert_{\cV}^{2} + \Vert {\bar p}(t) - p(t)\Vert_{\cQ}^{2} \lesssim \tau^{2\numDelay}t \Big(\Vert {\bar p}\Vert_{W^{\numDelay+1,\infty}(0, t;\cHQ)}^{2} + \Vert {\Phi}\Vert_{W^{\numDelay+1, \infty}(-\numDelay\tau, 0;\cHQ)}^{2}\Big).
\end{displaymath}
\end{proposition}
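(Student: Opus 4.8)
I would treat the two assertions separately: first existence together with the regularity ${\bar p}\in W^{\numDelay+1,\infty}(0,T;\cHQ)$, then the quantitative bound by an energy estimate for the difference of the two solutions. For existence I would use the \emph{method of steps}. On $[0,\tau]$ the delay term ${\widehat p}(\cdot\,;\tau)=\sum_{\ell=1}^{\numDelay}c_{\numDelay,\ell}\,\shift{\ell\tau}{\bar p}$ samples ${\bar p}$ only on $[-\numDelay\tau,0]$, where it equals the prescribed history~$\Phi$, so it is a known datum; solving the elliptic equation~\eqref{eq:ellpar:opt:delayK:a} for ${\bar u}=\cA^{-1}(f+\cD^{*}{\widehat p}(\cdot\,;\tau))$ and substituting $\dot{\bar u}$ into~\eqref{eq:ellpar:opt:delayK:b} reduces the system to the parabolic problem $\cC\dot{\bar p}+\cB{\bar p}=r-\cM\dot{\widehat p}(\cdot\,;\tau)$ with the self-adjoint non-negative $\cM$ from~\eqref{eq:ppde}, which is uniquely solvable by standard parabolic theory, and---for sufficiently smooth data---differentiating the equation in time provides any desired temporal regularity. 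Iterating over $[\tau,2\tau],[2\tau,3\tau],\dots$ (on each interval the forcing involves only already-constructed values of ${\bar p}$) produces ${\bar p}$ on all of $[0,T]$, and a finite induction over the step intervals and over the differentiation order---using the compatibility~\eqref{eq:histFunc} and the smoothness of $f,g,\Phi$---yields ${\bar p}\in W^{\numDelay+1,\infty}(0,T;\cHQ)$; the compatibility~\eqref{eq:histFunc} also gives ${\bar u}(0)=u^{0}$, as already observed in the text.

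For the estimate, set $e_{u}\vcentcolon={\bar u}-u$, $e_{p}\vcentcolon={\bar p}-p$, and let $\rho\vcentcolon={\bar p}-{\widehat p}(\cdot\,;\tau)$ be the interpolation defect. Subtracting~\eqref{eq:ellpar:opt} from~\eqref{eq:ellpar:opt:delayK} gives $\cA e_{u}=\cD^{*}(e_{p}-\rho)$ in $\cV^{*}$, and eliminating $e_{u}$ exactly as in the derivation of~\eqref{eq:ppde} yields the error equation
\begin{equation*}
	(\cC+\cM)\,\dot e_{p}+\cB e_{p}=\cM\dot\rho\quad\text{in }\cQ^{*},\qquad e_{p}(0)=0,
\end{equation*}
where $e_{p}(0)=0$ because ${\bar p}(0)=\Phi(0)=p^{0}=p(0)$, and $\rho(0)=0$ because $\sum_{\ell}c_{\numDelay,\ell}=1$. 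The decisive point is that $\dot\rho(t)=\sum_{\ell=1}^{\numDelay}c_{\numDelay,\ell}\,\dot{\bar p}(t-\ell\tau)-\dot{\bar p}(t)$ is exactly the error of Lagrange interpolation of $\dot{\bar p}$ at the nodes $t-\tau,\dots,t-\numDelay\tau$ evaluated at $t$; hence, by the remainder formula underlying \Cref{lem:pTaylor}, for a.e.~$t$
\begin{equation*}
	\Vert\dot\rho(t)\Vert_{\cHQ}\lesssim\tau^{\numDelay}\big(\Vert{\bar p}\Vert_{W^{\numDelay+1,\infty}(0,t;\cHQ)}+\Vert\Phi\Vert_{W^{\numDelay+1,\infty}(-\numDelay\tau,0;\cHQ)}\big)=\vcentcolon\tau^{\numDelay}M(t),
\end{equation*}
the $\Phi$-term accounting for nodes that fall into the history interval; since $\rho(0)=0$, this also gives $\Vert\rho(t)\Vert_{\cHQ}\le\int_{0}^{t}\Vert\dot\rho\Vert_{\cHQ}\ds\lesssim t\,\tau^{\numDelay}M(t)$.

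Testing the error equation with $\dot e_{p}$---rigorously via a Galerkin approximation and weak lower semicontinuity---and using that $\cC+\cM$ and $\cB$ are self-adjoint with $\cM\ge0$ gives $\Vert\dot e_{p}\Vert_{c}^{2}+\langle\cM\dot e_{p},\dot e_{p}\rangle+\tfrac12\tddt\Vert e_{p}\Vert_{b}^{2}=\langle\cM\dot\rho,\dot e_{p}\rangle$. Writing $\langle\cM\dot\rho,\dot e_{p}\rangle=a\big(\cA^{-1}\cD^{*}\dot\rho,\cA^{-1}\cD^{*}\dot e_{p}\big)$ and using the boundedness of $d$, the ellipticity of $a$ and $c$, and Young's inequality, one absorbs the $\dot e_{p}$-terms and is left with $\tddt\Vert e_{p}\Vert_{b}^{2}\lesssim\Vert\dot\rho\Vert_{\cHQ}^{2}$; integrating from $0$ to $t$ with $e_{p}(0)=0$ and inserting the bound on $\dot\rho$ yields
\begin{equation*}
	\Vert e_{p}(t)\Vert_{\cQ}^{2}\lesssim\Vert e_{p}(t)\Vert_{b}^{2}\lesssim\int_{0}^{t}\Vert\dot\rho(s)\Vert_{\cHQ}^{2}\ds\lesssim\tau^{2\numDelay}\,t\,M(t)^{2}.
\end{equation*}
Finally, testing $\cA e_{u}=\cD^{*}(e_{p}-\rho)$ with $e_{u}$ and using ellipticity of $a$ and boundedness of $d$ gives $\Vert e_{u}(t)\Vert_{\cV}\lesssim\Vert e_{p}(t)\Vert_{\cHQ}+\Vert\rho(t)\Vert_{\cHQ}\lesssim\Vert e_{p}(t)\Vert_{\cQ}+t\,\tau^{\numDelay}M(t)$; with $t^{2}\le T t$ on $[0,T]$ this gives $\Vert e_{u}(t)\Vert_{\cV}^{2}\lesssim\tau^{2\numDelay}\,t\,M(t)^{2}$, and summing the two bounds gives the claim.

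I expect the main obstacle to be the regularity assertion ${\bar p}\in W^{\numDelay+1,\infty}(0,T;\cHQ)$: the forcing $\dot{\widehat p}(\cdot\,;\tau)$ of the reduced parabolic problem may have jumps at the breakpoints $\ell\tau$---harmless for $L^{\infty}$ in time but requiring care if one wants continuity of the lower-order time derivatives across breakpoints---and, more importantly, near $t=0$ no parabolic smoothing is available, so the stated regularity presupposes sufficient compatibility of $p^{0}$ with $f,g$ (hidden in ``sufficiently smooth''); bookkeeping these compatibility conditions through the induction is the technical heart of that part. The energy estimate itself is then routine once the right norm ($\Vert\cdot\Vert_{b}$, obtained by testing with $\dot e_{p}$ rather than with $e_{p}$) is identified, the only subtlety being to justify that test, which the Galerkin detour handles.
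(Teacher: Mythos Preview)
Your argument is correct. The regularity part via the method of steps matches what the paper does (there by reference to~\cite[Prop.~A.4]{AltMU21b}), and your energy estimate goes through as written: the reduced error equation $(\cC+\cM)\dot e_p+\cB e_p=\cM\dot\rho$ with $e_p(0)=0$ is exactly what one gets from subtracting~\eqref{eq:ppde} from~\eqref{eqn:par:opt:delay}, testing with $\dot e_p$ and absorbing via Cauchy--Schwarz in the $\cM$-inner product (or, as you do, via the $a$-representation) gives the $b$-norm bound, and the elliptic recovery of $e_u$ together with $\|\rho(t)\|_{\cHQ}\le\int_0^t\|\dot\rho\|\ds$ yields the stated factor $t$.

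The paper's proof is organized differently: it stays with the \emph{coupled} error system in bilinear form, cf.~\eqref{eq:proof:close:1}, and performs two separate testings---once with $(v,q)=(\dot e_u,\dot e_p)$ on the sum of the time-differentiated elliptic error equation and the parabolic error equation, and once with $(v,q)=(\dot e_u,e_p)$ on the sum of the undifferentiated elliptic and parabolic error equations---to obtain the two intermediate estimates~\eqref{eq:proof:close:2} and~\eqref{eq:proof:close:3}, which are then combined. Your reduction to the single parabolic equation for $e_p$ (already available in the paper as~\eqref{eqn:par:opt:delay}) streamlines this to one energy test and a subsequent elliptic inversion; the price is the extra step $t^2\le Tt$ to control the $\|\rho\|$-contribution to $e_u$, whereas the paper's second testing delivers $\|e_u\|_\cV^2$ with the factor $t$ directly. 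The paper's route also produces the side information $\int_0^t\|\dot e_u\|_\cV^2+\int_0^t\|\dot e_p\|_{\cHQ}^2\lesssim\tau^{2\delta}t$, which is not needed for the proposition but comes for free there.
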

\begin{proof}
	The boundedness of $\bar{p}$ is no different than the one considered in \cite{AltMU21b}, except for the specification of the history function. Hence, we refer to \cite[Prop.~A.\,4]{AltMU21b} for the proof.
	Now define $e_{p}\vcentcolon= {\bar p} - p$ and $e_{u}\vcentcolon={\bar u} - u$.
	Taking the difference of \eqref{eq:ellpar:opt} and \eqref{eq:ellpar:opt:delay} as well as the difference of the time derivatives of \eqref{eq:ellpar:opt:a} and \eqref{eq:ellpar:opt:delay:a}, \Cref{lem:pTaylor} yields
	\begin{subequations}
		\label{eq:proof:close:1}
		\begin{align}
			a(e_u, v) - d(v, e_p) &= \frac{1}{(\numDelay-1)!}\sum_{\ell=1}^{\numDelay}(-1)^{(\ell-1)}\binom{\numDelay}{\ell}\int_{t-\ell\tau}^{t}(t - \zeta)^{\numDelay-1} d\Big(v, {\bar p}^{(\numDelay)}(\zeta)\Big)\dzeta, \label{eq:proof:close:1:a}\\
			a({\dot e}_u, v) - d(v, {\dot e}_p) &= \frac{1}{(\numDelay-1)!}\sum_{\ell=1}^{\numDelay}(-1)^{(\ell-1)}\binom{\numDelay}{\ell}\int_{t-\ell\tau}^{t}(t - \zeta)^{\numDelay-1} d\Big(v, {\bar p}^{(\numDelay+1)}(\zeta)\Big)\dzeta, \label{eq:proof:close:1:c}
		\end{align}
		and 
		\begin{align}
			d({\dot e}_u, q) + c({\dot e}_p, q) + b(e_p, q) = 0 \label{eq:proof:close:1:b}
		\end{align}
	\end{subequations}
	for all test functions $v \in \cV$ and $q \in \cQ$.
	The sum of \eqref{eq:proof:close:1:b} and \eqref{eq:proof:close:1:c} tested with $v={\dot e}_u, q={\dot e}_p$ results in
	\begin{align*}
		\Vert {\dot e}_u\Vert_{a}^{2} + \Vert {\dot e}_p\Vert_{c}^{2} &+
		\tfrac{1}{2}\tddt \Vert {e}_p\Vert_{b}^{2} = \frac{1}{(\numDelay-1)!}\sum_{\ell=1}^{\numDelay}(-1)^{(\ell-1)}\binom{\numDelay}{\ell}\int_{t-\ell\tau}^{t}(t - \zeta)^{\numDelay-1} d\Big({\dot e}_u, {\bar p}^{(\numDelay+1)}(\zeta)\Big)\dzeta\nonumber\\
		&\le \frac{\tau^\numDelay}{(\numDelay-1)!}\sum_{\ell=1}^{\numDelay} \binom{\numDelay}{\ell} \ell^{\numDelay} C_d \Vert {\dot e}_u \Vert_{\cV} \Vert {\bar p}^{(\numDelay+1)}\Vert_{L^{\infty}(t-\ell\tau,t;\cHQ)}\nonumber\\
		&\le \frac{1}{2}\Vert {\dot e}_u \Vert_{\cV}^{2} + \frac{\numDelay}{2}\frac{\tau^{2\numDelay}}{((\numDelay-1)!)^{2}} C_d^2\sum_{\ell=1}^{\numDelay} \binom{\numDelay}{\ell}\binom{\numDelay}{\ell} \ell^{2\numDelay}\Vert {\bar p}^{(\numDelay+1)}\Vert_{L^{\infty}(t-\ell\tau,t;\cHQ)}^{2}.
	\end{align*}
	Integrating over $[0,t]$ and using the ellipticity properties of the bilinear forms yields
	\begin{align}
		\label{eq:proof:close:2}
		\int_{0}^{t} \Vert {\dot e}_u\Vert_{\cV}^{2}\ds + \int_{0}^{t} \Vert {\dot e}_p\Vert_{\cHQ}^{2}\ds + \Vert { e}_p\Vert_{\cQ}^{2} \lesssim \tau^{2\numDelay}t \Big(\Vert {\bar p}^{(\numDelay+1)}\Vert_{L^{\infty}(0, t;\cHQ)}^{2} + \Vert {\Phi}^{(\numDelay+1)}\Vert_{L^{\infty}(-\numDelay\tau, 0;\cHQ)}^{2}\Big).
	\end{align}
	Similarly the sum of \eqref{eq:proof:close:1:a} and \eqref{eq:proof:close:1:b} with the choice of test functions $v={\dot e}_u, q={e}_p$ results in
	\begin{multline*}
		\frac{1}{2}\tddt\Vert {e}_u\Vert_{a}^{2} + \frac{1}{2}\tddt\Vert {e}_p\Vert_{c}^{2} +
		\Vert {e}_p\Vert_{b}^{2} \\
		\le \frac{1}{2}\Vert {\dot e}_u \Vert_{\cV}^{2} + \frac{\numDelay}{2}\frac{\tau^{2\numDelay}}{((\numDelay-1)!)^{2}} C_d^2\sum_{\ell=1}^{\numDelay} \binom{\numDelay}{\ell}\binom{\numDelay}{\ell} \ell^{2\numDelay}\Vert {\bar p}^{(\numDelay)}\Vert_{L^{\infty}(t-\ell\tau,t;\cHQ)}^{2}.
	\end{multline*}
	Integrating once more over $[0,t]$ yields
	\begin{align}
		\label{eq:proof:close:3}
		\Vert { e}_u\Vert_{\cV}^{2} + \Vert { e}_p\Vert_{\cHQ}^{2} + \int_{0}^{t}\Vert { e}_p\Vert_{\cQ}^{2}\ds &\lesssim \int_{0}^{t} \Vert {\dot e}_u\Vert_{\cV}^{2}\ds + \tau^{2\numDelay}t\, \Vert {\bar p}^{(\numDelay)}\Vert_{L^{\infty}(0, t;\cHQ)}^{2}\nonumber\\
		&\qquad\qquad\qquad\quad + \tau^{2\numDelay}t\, \Vert {\Phi}^{(\numDelay)}\Vert_{L^{\infty}(-\numDelay\tau, 0;\cHQ)}^{2}.
	\end{align}
	From \eqref{eq:proof:close:2} and \eqref{eq:proof:close:3} we arrive at the assertion.
\end{proof}

Similar to~\eqref{eq:ppde}, we mention here the inherent delay parabolic equation
\begin{equation}
	\label{eqn:par:opt:delay}
	\cC\dot{\bar{p}} + \cM \bigg( \sum_{\ell=1}^{\numDelay} c_{\numDelay,\ell} 
					\shift{\ell\tau}{\dot{\bar{p}}} \bigg)  + \cB \bar{p} = r,
\end{equation}
which is obtained from~\eqref{eq:ellpar:opt:delayK} by solving for $\bar{u}$ in the first equation.
%
\section{Summation Assumption and Convergence Analysis} 
\label{sec:summation:bdf}

As discussed before, to obtain higher-order schemes for~\eqref{eq:ellpar:opt} which decouple, 
we apply an implicit discretization (of higher order) to the delay equation~\eqref{eq:ellpar:opt:delayK} 
with a constant time step size equal to the delay. More precisely, we discuss the application of the 
well-known \BDF schemes. For a sequence $(y^n)_{n\in\Z}$ and a constant step size $\tau$, we define 
the \BDF-$\numBDF$ operator 
\begin{equation}
	\label{eqn:BDFoperator}
	\bdf{\numBDF}{y^n} \vcentcolon= \frac{1}{\tau}\, 
			\sum_{\ell=0}^\numBDF \xi_{\numBDF,\ell}\, y^{n-\ell},
\end{equation}
where the coefficients $\xi_{\numBDF,\ell}$
are listed in \Cref{tab:coeffBDF} (cf.~\cite[Ch.~III.3]{HaiNW09}). 
Recall that \BDF methods up to order $\numBDF=6$ are stable~\cite[Ch.~III.3]{HaiNW09}. 
\begin{table}[]
	\renewcommand{\arraystretch}{1.25}
	\caption{\BDF-$\numBDF$ coefficients~$\xi_{\numBDF,\ell}$, $\ell=0,\dots,\numBDF$, for different values of $\numBDF$.}
	\label{tab:coeffBDF}
	\centering
	\begin{tabular}{@{\quad}c@{\quad}|rrrr}
		\toprule
		$k$ & \quad$\xi_{k,0}$ & \quad$\xi_{k,1}$ & \quad$\xi_{k,2}$ & \quad$\xi_{k,3}$\\	\midrule
		$1$ & $1$ & $-1$& &\\
		$2$ & $\tfrac{3}{2}$ & $-2$ & $\tfrac{1}{2}$&\\
		$3$ & $\tfrac{11}{6}$ & $-3$ & $\tfrac{3}{2}$& $-\tfrac{1}{3}$\\
		\bottomrule
	\end{tabular}
\end{table}
The decoupling time discretization scheme for~\eqref{eq:ellpar:opt}
is thus obtained by replacing the time derivatives in~\eqref{eq:ellpar:opt:delayK}
with the \BDF-$k$ operator, i.e., we consider the scheme
\begin{subequations}
	\label{eq:ellpar:opt:delay:BDF}
	\begin{align}
	\cA u^n - \cD^{*}  \delayOperKdisc{\numDelay}(p^{n})
	&= f^n \qquad \text{in } \cV^{*}, \label{eq:ellpar:opt:delayK:BDF:a}\\
	\cD \bdf{\numBDF}{u^n} + \cC \bdf{\numBDF}{p^n} + \cB p^n 
	&= g^n \hspace{0.79cm} \text{in } \cQ^{*}\label{eq:ellpar:opt:delayK:BDF:b}
	\end{align}
\end{subequations}
with the discrete delay operator
\begin{align}
	 \delayOperKdisc{\numDelay}(p^{n}) 
	 \vcentcolon= \sum_{\ell=1}^{\numDelay} c_{\numDelay,\ell}\, p^{n-\ell}.
	 \label{eq:delayOperator}
\end{align}
Note that~\eqref{eq:ellpar:opt:delay:BDF} defines a multistep scheme with $\max{(\numBDF, \numDelay)}$ steps.
\begin{remark}
Scheme~\eqref{eq:ellpar:opt:delay:BDF} can also be interpreted as an $(\alpha,\beta,\gamma)$-\BDF discretization of~\eqref{eq:ellpar:opt}. 
In \cite{Cro80,AkrCM98} the combination of \BDF-$k$ with an explicit multistep method of order $k$ is considered, based on the characteristic polynomials 
\[
\alpha(\zeta) = \sum_{j=1}^k \tfrac1j\, \zeta^{k-j}(\zeta-1)^j,\qquad
\beta(\zeta) = \zeta^k, \qquad
\gamma(\zeta) = \zeta^k - (\zeta-1)^k. 
\]
This means that the standard \BDF-$k$ method -- represented by $(\alpha,\beta)$ --  is applied to all but the extrapolated values. 
For the latter, which in our case corresponds to $p$ in~\eqref{eq:ellpar:opt:a}, a multistep discretization defined by $\gamma$ is used instead. 
Since the coefficients of $\gamma$ coincide with the delay coefficients given in \Cref{tab:coeffDelays}, this then yields~\eqref{eq:ellpar:opt:delay:BDF}.   
Unfortunately, known convergence results for $(\alpha,\beta,\gamma)$ schemes do not apply for the here-considered elliptic--parabolic systems. 
\end{remark}

\subsection{Equivalent formulation}
To simplify the convergence analysis, we make the following observation.
Solving~\eqref{eq:ellpar:opt:delayK:BDF:a} for $u^n$,
substituting the expression into~\eqref{eq:ellpar:opt:delayK:BDF:b},
and exploiting that the discrete delay operator and the \BDF-$\numBDF$
operator commute, we obtain
\begin{equation}
	\label{eqn:par:opt:delay:bdf}
	\cC\bdf{\numBDF}{p^n} + \cM \delayOperKdisc{\numDelay} \bdf{\numBDF}{p^{n}}
	+ \cB p^n = r^n
\end{equation}
with $\cM = \cD\cA^{-1}\cD^{*}$ as before and $r^n \vcentcolon=  g^n - \cD\cA^{-1}\dot{f}^n$.
We immediately notice that~\eqref{eqn:par:opt:delay:bdf} equals the \BDF-$k$ discretization with step size~$\tau$ of the operator delay equation~\eqref{eqn:par:opt:delay}.
In particular, we conclude that instead of showing that the
solution of~\eqref{eq:ellpar:opt:delay:BDF} converges to the 
solution of~\eqref{eq:ellpar:opt}, we can equivalently show 
that the solution of~\eqref{eqn:par:opt:delay:bdf} converges to the solution of~\eqref{eqn:par:opt:delay}. 
Note, however, that the combination of the delay and the \BDF approximation yields a $(\numDelay+\numBDF)$-step method for~$p$. Afterwards, $u$ can be reconstructed.
\begin{example}
	For the specific choice $\numDelay=\numBDF$ we obtain the schemes
	\begin{subequations}
		\begin{align*}
			\numBDF &= 1: & \cC p^{n} - (\cC - \cM) p^{n-1}
			- \cM p^{n-2} + \tau\cB p^{n} &= \tau r^{n},\\
			\notag\numBDF &= 2: & 3 \cC p^{n} - (4\cC - 6\cM) p^{n-1} 
			+ (\cC - 11\cM) p^{n-2} \qquad &\\
			&& +\,6\cM p^{n-3} -\cM p^{n-4} 
			+  2\tau\cB p^{n} &= 2\tau r^{n},\\
			\notag\numBDF &= 3: & 11 \cC p^{n} - (18\cC - 33\cM ) p^{n-1}
			+ (9\cC - 87\cM) p^{n-2}+ (-2\cC + 92\cM) p^{n-3}\qquad &\\
			\notag&&  -51 \cM p^{n-4} + 15\cM p^{n-5} -\,2\cM p^{n-6} +  6\tau\cB p^{n} &= 6\tau r^{n}.
		\end{align*}
	\end{subequations}
\end{example}

\subsection{Summation assumption}
For the ease of presentation, we introduce the following notation.
Given a real Hilbert space $\cX$, $s\in\N$, and a matrix~$G\in\spd{s}$,
where $\spd{s}$ is the set of real-valued symmetric positive definite matrices
of size $s\times s$, we define the weighted inner product
\begin{displaymath}
	\Vert Y \Vert_{G}^{2} \vcentcolon= 
			\langle G Y,Y\rangle_{\cX^{s}} \qquad\text{for } Y \in \cX^{s}.
\end{displaymath}
Moreover, for a sequence $(y^n)_{n\in\Z}$ with $y^n\in\cX$ and
numbers $\numBDF,\numDelay\in\N$, we define
\begin{align}
	\label{eqn:stackedVars}
	Y^{n} = \begin{bmatrix}
			y^{n}\\
			\vdots\\
			y^{n-\numBDF-\numDelay+1}
		\end{bmatrix} \in \cX^{\numBDF+\numDelay}.
\end{align}

For the convergence analysis, we need the following algebraic criterion that leads to $G$--stability. 
Here, we follow the multiplier technique of Nevanlinna and Odeh~\cite{NevO81} but formulate the criterion as an assumption which needs to be verified afterwards.
\begin{assumption}[Summation assumption]
	\label{ass:summation:bdf}
	Let $\cX$ be a Hilbert space and $\omega,\tau>0$.  
	Then there exist a parameter~$\eta\in[0,1)$ and for any $\mu\in[0,\omega]$
	a matrix $G\in\spd{\numBDF+\numDelay}$ 
	and a vector $\gamma = [\gamma_0,\ldots,\gamma_{\numBDF+\numDelay}]
	\in\R^{\numBDF+\numDelay+1}$ such that for any sequence
	$\{y^n\}_{n\in\Z}$ with $y^n\in\cX$ for $n\in\Z$ we have
	\begin{equation}
		\label{eq:summation:bdf}
		\begin{aligned}
			\Big\langle \tau\bdf{\numBDF}{y^{n}}
				+ \mu \delayOperKdisc{\numDelay}(\tau\bdf{\numBDF}{y^{n}})&,
				y^{n} - \eta y^{n-1}\Big\rangle_{\cX}
			= \Vert Y^{n} \Vert_{G}^{2} - \Vert Y^{n-1} \Vert_{G}^{2}
			+ \Big\| \sum_{i=0}^{\numBDF+\numDelay}\gamma_{i} y^{n-i} \Big\|_{\cX}^2,
		\end{aligned}
	\end{equation}
	where $Y^n$ is defined as in \eqref{eqn:stackedVars} and
	$\delayOperKdisc{\numDelay}$ is the discrete delay operator defined
	in~\eqref{eq:delayOperator}.
\end{assumption}
Let us emphasize that \Cref{ass:summation:bdf} only formally depends on $\tau$,
since $\tau\bdf{\numBDF}{\cdot}$ is independent of $\tau$,
and that the matrix $G$ as well as the vector $\gamma$ depend on $\mu$. 
In the forthcoming convergence proof, we need to estimate the matrix $G$ in terms of its smallest and largest eigenvalues and thus make the following assumption.
\begin{assumption}
	\label{ass:summation:bdf:niceEigvals}
	Let the notation be as in \Cref{ass:summation:bdf}. Then 
	\begin{align}
		c_G \vcentcolon= \inf_{\mu\in[0,\omega]} \lambda(G(\mu)) > 0\qquad\text{and}\qquad 
		C_G \vcentcolon= \sup_{\mu\in[0,\omega]} \lambda(G(\mu)) < \infty,
	\end{align}
	where $\lambda(G)$ is the set of eigenvalues of the symmetric matrix $G$.
\end{assumption}
%
%
\subsection{Convergence analysis}
\label{subsec:BDF:convergence}
In the following, we use \Cref{ass:summation:bdf,ass:summation:bdf:niceEigvals} to prove the convergence of the \BDF discretization for the delay system~\eqref{eqn:par:opt:delay}. 
The verification of these assumptions is then subject of the upcoming \Cref{subsec:assVeri}. 
\begin{theorem}
\label{thm:BDFdelay}
Consider the solution ${\bar p}$ of the delay equation~\eqref{eqn:par:opt:delay} 
for a sufficiently smooth right-hand side $r\colon[0,T]\rightarrow\cQ^*$.
Let $p^n$ be its approximation at time step $t^n$ 
given by~\eqref{eqn:par:opt:delay:bdf}.
If \Cref{ass:summation:bdf,ass:summation:bdf:niceEigvals} hold, then we get 
\begin{displaymath}
	\Vert p^{n} - {\bar p}(t^{n}) \Vert^{2}_{\cHQ} + 
		\tau\Vert p^{n} - {\bar p}(t^{n}) \Vert^{2}_{\cQ}
	\lesssim \tau^{2\numBDF}
	+ \sum_{\ell=0}^{k-1}\Vert p^{\ell} - {\bar p}(t^{\ell})\Vert_{\cHQ}^{2} 
	+ \tau\,\Vert p^{k-1} - {\bar p}(t^{k-1}) \Vert^{2}_{\cQ}.
\end{displaymath}
\end{theorem}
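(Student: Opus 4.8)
The strategy is a standard energy/telescoping argument for BDF methods, but carried out in the weighted $G$-norm supplied by \Cref{ass:summation:bdf}. Write $e^n := p^n - \bar p(t^n)$ for the error and let $E^n := [e^n,\ldots,e^{n-\numBDF-\numDelay+1}]\tran \in \cHQ^{\numBDF+\numDelay}$ be the corresponding stacked vector as in~\eqref{eqn:stackedVars}. First I would derive the error equation: since $\bar p$ solves~\eqref{eqn:par:opt:delay} exactly and $p^n$ solves~\eqref{eqn:par:opt:delay:bdf}, subtracting gives
\begin{equation*}
	\cC \bdf{\numBDF}{e^n} + \cM \delayOperKdisc{\numDelay}\bdf{\numBDF}{e^n} + \cB e^n = d^n,
\end{equation*}
where $d^n$ is a consistency/defect term collecting the difference between $\cC\dot{\bar p} + \cM\delayOperKdisc{\numDelay}(\shift{\ell\tau}{\dot{\bar p}})$ and its BDF-$\numBDF$ analogue evaluated at $t^n$. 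Because BDF-$\numBDF$ is a consistent multistep method of order $\numBDF$ and $\bar p \in W^{\numBDF+1,\infty}$ (which follows from the smoothness of $r$, analogous to \Cref{prop:distanceDelay}), a Taylor expansion yields the pointwise bound $\Vert d^n\Vert_{\cQ^*} \lesssim \tau^{\numBDF}$ with a constant depending on $\Vert \bar p\Vert_{W^{\numBDF+1,\infty}}$; a bit of care is needed since $\cM$ acts on the delayed BDF terms, but $\cM$ is bounded and the delay coefficients $c_{\numDelay,\ell}$ are fixed, so this is routine.

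Next comes the core step: test the error equation with $e^n - \eta e^{n-1}$, the same multiplier appearing in \eqref{eq:summation:bdf}, and multiply through by $\tau$. The term $\langle \tau\cC\bdf{\numBDF}{e^n} + \tau\cM\delayOperKdisc{\numDelay}\bdf{\numBDF}{e^n}, e^n - \eta e^{n-1}\rangle$ is exactly of the form controlled by \Cref{ass:summation:bdf} — one applies the assumption separately to the $\cC$-part (with $\mu=0$, $\cX=\cHQ$ in the $\cC$-inner product, or rather by spectrally decomposing $\cC$) and, more importantly, to $\langle \tau\cC\bdf{\numBDF}{e^n} + \mu\cdot(\text{delay BDF}), \cdots\rangle$ with $\mu$ chosen pointwise in the spectrum of $\cC^{-1/2}\cM\cC^{-1/2}$, which lies in $[0,\omega]$ by definition~\eqref{eqn:couplingStrength} of the coupling strength. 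This is where the weak coupling condition $\omega < $ (threshold) enters: \Cref{ass:summation:bdf} only provides a valid $G(\mu)$ for $\mu \in [0,\omega]$, so we need the spectrum of the self-adjoint operator $\cC^{-1/2}\cM\cC^{-1/2}$ to sit inside that range. The outcome is an identity of the shape
\begin{equation*}
	\Vert \tilde E^n\Vert_G^2 - \Vert \tilde E^{n-1}\Vert_G^2 + \big\|{\textstyle\sum_i}\gamma_i \tilde e^{n-i}\big\|^2 + \tau\langle \cB e^n, e^n - \eta e^{n-1}\rangle = \tau\langle d^n, e^n - \eta e^{n-1}\rangle,
\end{equation*}
where the tildes denote the appropriate spectral components. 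The $\cB$-term is handled by the standard trick for the multiplier method: $\langle \cB e^n, e^n - \eta e^{n-1}\rangle \geq \tfrac{1-\eta}{2}\Vert e^n\Vert_b^2 + (\text{telescoping in } \Vert\cdot\Vert_b^2) - (\text{positive leftover})$, using $2b(e^n,e^{n-1}) \le \Vert e^n\Vert_b^2 + \Vert e^{n-1}\Vert_b^2$; since $\eta<1$ this retains a genuinely positive multiple of $\tau\Vert e^\ell\Vert_b^2 \gtrsim \tau\Vert e^\ell\Vert_{\cQ}^2$, which will produce the summed term on the left of the claimed estimate.

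Then I would sum the identity over $\ell = \numBDF, \ldots, n$. The $G$-norm terms telescope to $\Vert \tilde E^n\Vert_G^2 - \Vert \tilde E^{\numBDF-1}\Vert_G^2$; by \Cref{ass:summation:bdf:niceEigvals} these are equivalent, up to the constants $c_G, C_G$ independent of $\mu$ and $\tau$, to $\sum_{j} \Vert e^{n-j}\Vert_{\cHQ}^2$ and to the initial-value contribution $\sum_{\ell=0}^{\numBDF-1}\Vert e^\ell\Vert_{\cHQ}^2$ respectively (recovering the two terms on the right). The defect contributes $\tau\sum_\ell \langle d^\ell, e^\ell - \eta e^{\ell-1}\rangle$, which I estimate by Cauchy–Schwarz and Young: $\le \varepsilon\tau\sum_\ell \Vert e^\ell\Vert_{\cQ}^2 + C_\varepsilon \tau\sum_\ell \Vert d^\ell\Vert_{\cQ^*}^2 \lesssim \varepsilon\tau\sum_\ell\Vert e^\ell\Vert_{\cQ}^2 + \tau\cdot n\tau \cdot \tau^{2\numBDF} \lesssim \varepsilon\tau\sum\Vert e^\ell\Vert_\cQ^2 + \tau^{2\numBDF}$ (absorbing $n\tau \le T$). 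Choosing $\varepsilon$ small enough to absorb into the positive $\tau\sum\Vert e^\ell\Vert_b^2$ from the $\cB$-term finishes the estimate; the leftover $\Vert\sum_i\gamma_i e^{n-i}\Vert^2$ terms are nonnegative and simply dropped. A discrete Grönwall step is not even needed here because the right-hand side has no $\Vert e^\ell\Vert_{\cHQ}^2$ survivors after absorption — though if the defect estimate leaves a weak $\Vert e^\ell\Vert_{\cHQ}^2$ tail (e.g. from $\cB$ being only $\cHQ$-bounded on one side), a standard discrete Grönwall inequality closes it.

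The main obstacle is the bookkeeping in the second paragraph: \Cref{ass:summation:bdf} is phrased scalar-wise (for sequences in an abstract Hilbert space with a scalar parameter $\mu$), while the operator $\cM$ relative to $\cC$ is genuinely operator-valued. Making the "spectral decomposition / pointwise-in-$\mu$" argument rigorous — i.e. using the spectral theorem for the self-adjoint nonnegative $\cC^{-1/2}\cM\cC^{-1/2}$, applying the assumption fibrewise, and reassembling while keeping all constants uniform over the spectrum (this is exactly what \Cref{ass:summation:bdf:niceEigvals} is designed to guarantee) — is the delicate part. The other subtlety is confirming that the spectrum indeed lies in $[0,\omega]$; this follows because for any $p\in\cHQ$, $\langle\cM p,p\rangle = \Vert \cA^{-1/2}\cD^* p\Vert^2 \le C_d^2/c_a \cdot \Vert p\Vert_{\cHQ}^2$ while $\langle\cC p,p\rangle \ge c_c\Vert p\Vert_{\cHQ}^2$, giving the Rayleigh-quotient bound $\omega = C_d^2/(c_a c_c)$ as in~\eqref{eqn:couplingStrength}. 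Everything else is routine energy-method manipulation of the kind already seen in the proof of \Cref{prop:distanceDelay}.
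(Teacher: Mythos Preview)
Your plan is correct and matches the paper's proof almost exactly: derive the error equation, test with $e^n - \eta e^{n-1}$, spectrally decompose $\cM$ with respect to the $c$-inner product (eigenvalues $\mu_i \in [0,\omega]$), apply \Cref{ass:summation:bdf} fibrewise to the scalar components, and invoke \Cref{ass:summation:bdf:niceEigvals} for uniform equivalence of the $G$-norms. The one place the paper differs from your sketch is the handling of the $\cB$-term and the Gr\"onwall step: rather than your clean telescoping decomposition, the paper uses $2\langle\cB e^n,e^n-\eta e^{n-1}\rangle \ge \|e^n\|_b^2 - \eta^2\|e^{n-1}\|_b^2$, absorbs the negative piece into the $G$-norm via $\|e^{n-1}\|_b^2 \lesssim c_G^{-1}\sum_i\|E_i^{n-1}\|_{G(\mu_i)}^2$, and closes with a perturbed discrete Gr\"onwall inequality of the form $a^n - (1+\tau C)a^{n-1} + b^n \le c^n$ --- so the fallback you mention is exactly what the paper uses, and for $\eta>0$ it is actually needed (your telescoping of $\|\cdot\|_b^2$ would otherwise leave a $\cQ$-norm of the initial error on the right, stronger than the $\cHQ$-norm in the statement).
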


\begin{proof}
	We define the error and the defect by
	\begin{displaymath}
		e^{n} \vcentcolon= p^{n} - {\bar p}(t^{n}), \qquad
		d^{n}\vcentcolon= \frac{1}{\tau}\, \bigg(\sum_{\ell=0}^{\numBDF} \xi_{\numBDF,\ell} \bar{p}(t^{n-\ell})\bigg) - \dot{\bar{p}}(t^{n}).
	\end{displaymath} 
	Due to the construction of the \BDF operator, we immediately conclude that the defect satisfies the estimate
	$\Vert d^{n} \Vert_{\cHQ} \lesssim \tau^{\numBDF}$.
	Considering the difference between~\eqref{eqn:par:opt:delay} 
	and~\eqref{eqn:par:opt:delay:bdf} and testing with~$e^{n} - \eta e^{n-1}$, where $\eta$ is the multiplier from \Cref{ass:summation:bdf}, we obtain
	\begin{align}
			\Big\langle \cC\tau\bdf{\numBDF}{e^{n}}
				+ \cM \delayOperKdisc{\numDelay}(\tau\bdf{\numBDF}{e^{n}})
				&,e^{n}-\eta e^{n-1}\Big\rangle 
			+ \tau\, \langle \cB e^{n},e^{n}-\eta e^{n-1}\rangle \notag \\
			&\qquad\quad= \tau\, \Big\langle d^{n} 
				+ \cM \delayOperKdisc{\numDelay}({d^{n}})
				,e^{n} - \eta e^{n-1} \Big\rangle. \label{eqn:errDefectRelation}
	\end{align} 
	Since $\cB$ is self-adjoint, we have
	\begin{align*}
		2\, \langle \cB e^{n},e^{n}-\eta e^{n-1}\rangle
		= \| e^n\|_{b}^2 - \eta^2\, \| e^{n-1}\|_{b}^2 + \| e^n-\eta e^{n-1}\|_{b}^2
		\geq \| e^n\|_{b}^2 - \eta^2\,\| e^{n-1}\|_{b}^2.
	\end{align*}
	We now consider the spectral decomposition of the operator~$\cM$ w.r.t.~the inner product defined by the bilinear form~$c$. This means that we seek functions $\phi_i \in \cHQ$ satisfying 	
	\begin{displaymath}
		\cM \phi_{j} = \mu_j\cC\phi_{j}\qquad\text{and}\qquad \langle \cC \phi_j,\phi_i\rangle = \delta_{ji}.
	\end{displaymath}
	Following the spectral result in~\cite[Thm.~6.11]{Bre11}, this yields an \emph{orthonormal basis} of $\cHQ$ (w.r.t.~the $c$-norm). For this, we use that fact that $\cM$ is compact if interpreted as operator mapping from $\cQ$ to its dual. 
	Thus is due to the fact that this involves the compact embedding $\cV \hook \cHV$. For the eigenvalues, we conclude $0 \le  \mu_j \le \omega$, since the operator $\calC^{-1}\cM$ is bounded by $\omega$.  
	Now define $\epsilon_i^n \vcentcolon= \langle \cC e^n, \phi_i\rangle \in \R$
	and $E_i^n \vcentcolon= [\epsilon_i^n, \ldots, \epsilon_i^{n-\numBDF-\numDelay+1}]^\T \in \R^{\numBDF+\numDelay}$, leading to the basis expansion
	$e^{n} = \sum_{i=1}^{\infty}\epsilon^{n}_i \phi_{i}$. 
	Using that all eigenvalues are bounded
	above by $\omega$ and \Cref{ass:summation:bdf},
	the first term in~\eqref{eqn:errDefectRelation} can be rewritten as 
	\begin{align*}
		\Big\langle \cC\tau\bdf{\numBDF}{e^{n}} + 
		\cM \delayOperKdisc{\numDelay}(\tau\bdf{\numBDF}{e^{n}})
		, e^{n}-\eta e^{n-1}\Big\rangle_{\cHQ}
		&= \sum_{i=1}^{\infty}\Big\langle \tau\bdf{\numBDF}{\epsilon^{n}_i} + 
		 \mu_i \delayOperKdisc{\numDelay}(\tau\bdf{\numBDF}{\epsilon^{n}_i})
		 , \epsilon^{n}_i - \eta \epsilon^{n-1}_i\Big\rangle_{\R}\\
		 &\geq \sum_{i=1}^{\infty} \Big(\Vert E^{n}_{i} \Vert_{G(\mu_i)}^{2} 
		 - \Vert E^{n-1}_{i} \Vert_{G(\mu_i)}^{2}\Big).
	\end{align*}
	Due to the orthonormality of the $\phi_{i}$ functions,
	we have $\Vert e^{n}\Vert^{2}_{c} = \sum_{i=1}^{\infty}(\epsilon^{n}_i)^{2}$. 
	In the same way, it holds that
	\begin{align*}
		\Vert E^{n} \Vert^{2}_{c} = \Big\langle 
		\sum_{i=1}^{\infty}\cC E^{n}_i \phi_{i},
		\sum_{j=1}^{\infty}E^{n}_j \phi_{j}\Big\rangle_{\cHQ^{\numBDF+\numDelay}}
			= \sum_{i=1}^{\infty}\sum_{j=1}^{\infty}\langle \cC E^{n}_i \phi_{i}, 
			E^{n}_j \phi_{j}\rangle_{\cHQ^{\numBDF+\numDelay}} 
			= \sum_{i=1}^{\infty}\Vert E^{n}_{i} \Vert^{2}
	\end{align*}
	where $E_i^n\in\R^{\numBDF+\numDelay}$ and $E^{n}\in\cHQ^{\numBDF+\numDelay}$
	(see~\eqref{eqn:stackedVars}).
	Using the weighted Young's inequality for~\eqref{eqn:errDefectRelation}, we obtain with (yet to be determined constants)
	$\delta_1,\delta_2,\delta_3,\delta_4>0$, 
	\begin{align*}
		 \Big\langle d^{n} &+ \cM \Big(\sum_{\ell=1}^{\numDelay}
		 c_{\numDelay,\ell}d^{n-\ell}\Big), e^{n} - \eta e^{n-1} \Big\rangle\\
		 &= \langle d^n,e^n\rangle - \eta\, \langle d^n,e^{n-1}\rangle
		 + \sum_{\ell=1}^{\numDelay} c_{\numDelay,\ell} \Big(
			\langle \cM d^{n-\ell},e^{n}\rangle - \eta\,\langle \cM d^{n-\ell},
			e^{n-1}\rangle\Big)\\
		 &\leq \big(\tfrac{\delta_1}{2}\|e^n\|^2+ \tfrac{1}{2\delta_1}\|d^n\|^2\big)
		 + \eta\, \big( \tfrac{\delta_2}{2}\|e^{n-1}\|^2
		 	+ \tfrac{1}{2\delta_2}\|d^n\|^2\big)\\
		 &\qquad + \sum_{\ell=1}^{\numDelay} |c_{\numDelay,\ell}|\Big(
		 \tfrac{\delta_3}{2}\|e^n\|^2 + \tfrac{1}{2 \delta_3} \|\calM d^{n-\ell}\|^2
		 \Big)
		 + \eta\sum_{\ell=1}^{\numDelay} |c_{\numDelay,\ell}| \Big(
		\tfrac{\delta_4}{2}\|e^{n-1}\|^2 + \tfrac{1}{2 \delta_4} \|\calM d^{n-\ell}\|^2
		\Big)\\
		 &\leq \big(\tfrac{\delta_1}{2} + \tfrac{\bar{c}_{\numDelay}\delta_3}{2}\big)
		 \|e^{n}\|^2 + \eta\,\big(
			\tfrac{\delta_2}{2} + \tfrac{\bar{c}_{\numDelay}\delta_4}{2}
			\big) \|e^{n-1}\|^2 + \big(
				\tfrac{1}{2\delta_1} + \tfrac{\eta}{2\delta_2} \big)\|d^n\|^2\\
		 &\qquad + c_c^2\,\omega^2\big(\tfrac{1}{2\delta_3}
		 + \tfrac{\eta}{2\delta_4} \big)
		 \sum_{\ell=1}^{\numDelay} |c_{\numDelay,\ell}| \|d^{n-\ell}\|^2,
	\end{align*}
	where $\bar{c}_{\numDelay} \vcentcolon= 
	\sum_{\ell=1}^{\numDelay} |c_{\numDelay,\ell}|$.
	Note that we have made use of
	$\Vert\cM \cdot\Vert \le c_c\,\omega \Vert\cdot\Vert$ 
	in the last inequality.
	With the choice
	\begin{align*}
		\delta_1 &= \frac{c_b}{2}(1 - \eta^2), &
		\delta_2 &= \frac{\eta}{2}, &
		\delta_3 &= \frac{c_b}{2}\frac{1}{\bar{c}_{\numDelay}}(1 - \eta^2), &
		\delta_4 &= \frac{\eta}{2}\frac{1}{\bar{c}_{\numDelay}},
	\end{align*}
 	we obtain the estimate 
	\begin{align*}
		\sum_{i=1}^{\infty} \Bigg(\Vert E^{n}_{i} \Vert_{G(\mu_i)}^{2}
		- \Vert E^{n-1}_{i} \Vert_{G(\mu_i)}^{2}&\Bigg)
		+ \tfrac{1}{2}\tau\eta^2\| e^n\|_{b}^2 - \tfrac{1}{2}\tau\eta^2\| e^{n-1}\|_{b}^2 - \tfrac{1}{2}\tau\eta^2\| e^{n-1}\|^2\\
		&\le \Big(1 + \tfrac{1}{c_b (1 - \eta^2)}\Big)
		\Bigg(
			\tau\, \Vert d^{n} \Vert^{2} +
		\tau c_c^2\,\omega^2\bar{c}_{\numDelay}
		\sum_{\ell=1}^{\numDelay}\vert c_{\numDelay,\ell}\vert \Vert d^{n-\ell} \Vert^{2} 
		\Bigg)\nonumber\\
		&\lesssim (1 + \omega^2)\, \tau^{2\numBDF + 1}.
	\end{align*}
	Using \Cref{ass:summation:bdf:niceEigvals}, we observe that
	\begin{align*}
		\Vert e^{n-1} \Vert^{2}
		\le \tfrac{1}{c_c}\Vert e^{n-1} \Vert^{2}_{c}
		= \tfrac{1}{c_c}\sum_{i=1}^{\infty}(\epsilon^{n-1}_i)^{2}
		\le \tfrac{1}{c_c}\sum_{i=1}^{\infty}\tfrac{1}{c_{G(\mu_i)}}
		\Vert E^{n-1}_{i} \Vert_{G(\mu_i)}^{2}
		\le \tfrac{1}{c_c c_G}\sum_{i=1}^{\infty}
		\Vert E^{n-1}_{i} \Vert_{G(\mu_i)}^{2}.
	\end{align*}
	Overall, this yields the perturbed telescope sum
	\begin{align*}
		\sum_{i=1}^{\infty} \Bigg(\Vert E^{n}_{i} \Vert_{G(\mu_i)}^{2}
		- (1 + \tau C)\Vert E^{n-1}_{i} \Vert_{G(\mu_i)}^{2}\Bigg)
		 + \tfrac{1}{2}\tau\eta^2 \Big(\| e^n\|_{b}^2 - \| e^{n-1}\|_{b}^2\Big)
		\lesssim (1 + \omega^2)\, \tau^{2\numBDF + 1},
	\end{align*}
	where $C = \eta^2\tfrac{1}{2 c_c c_G}$.
	To see that the Gr\"{o}nwall-type inequality from \cite[Lem.~2.5]{AltMU24b}
	can be applied, define ${a}^{n} \vcentcolon= \sum_{i=1}^{\infty} \Vert E^{n}_{i} \Vert_{G(\mu_i)}^{2}$. 
	This then leads to 
	\begin{align*}
		\sum_{i=1}^{\infty} \Vert E^{n}_{i} \Vert_{G(\mu_i)}^{2}
		 + \tau\, \Vert e^{n} \Vert^{2}_{b}
		 \lesssim e^{C n \tau} 
		 \Bigg(\sum_{i=1}^{\infty} \Vert E^{\numBDF-1}_{i} \Vert_{G(\mu_i)}^{2} + \tau\, \Vert e^{k-1} \Vert^{2}_{b}
		 + t^{n}\tau^{2\numBDF}
		 \Bigg).
	\end{align*}
	Together with
	\begin{align*}
		\sum_{i=1}^{\infty} \Vert E^{n}_{i} \Vert_{G(\mu_i)}^{2}
		\ge c_G \sum_{i=1}^{\infty} \Vert E^{n}_{i} \Vert^{2}
		= c_G \Vert E^{n}\Vert^{2}_{c}
		\ge c_G \Vert e^{n}\Vert^{2}_{c}
	\end{align*}
	and
	\begin{align*}
		\sum_{i=1}^{\infty} \Vert E^{\numBDF-1}_{i} \Vert_{G(\mu_i)}^{2}
		\le C_G \sum_{i=1}^{\infty} \Vert E^{\numBDF-1}_{i} \Vert^{2}
		= C_G\, \Vert E^{\numBDF-1}\Vert^{2}_{c}
		= C_G\, \Big(\Vert e^{\numBDF-1}\Vert^{2}_{c} + \cdots
					+ \Vert e^{-\numDelay-1}\Vert^{2}_{c} \Big)
	\end{align*}
	we obtain the desired estimate using the history function given in~\eqref{eq:histFunc}.
\end{proof}
\begin{remark}
	\label{rem:errorBalance}
	Suppose that the assumptions from \Cref{prop:distanceDelay}
	and~\Cref{ass:summation:bdf,ass:summation:bdf:niceEigvals} hold.
	Let $p$ be the solution of the original system~\eqref{eq:ppde} and $p^n$ the
	numerical approximation at time $t^n$ given by~\eqref{eqn:par:opt:delay:bdf}.
	Then the difference satisfies
	\begin{displaymath}
		\Vert u^{n} - {u}(t^{n}) \Vert^{2}_{\cV} + 
		\Vert p^{n} - {p}(t^{n}) \Vert^{2}_{\cHQ}
		\lesssim \tau^{2\numDelay} + \tau^{2\numBDF}
		+ \sum_{\ell=0}^{k-1}\Vert p^{\ell} - {p}(t^{\ell})\Vert_{\cHQ}^{2},
	\end{displaymath}
	indicating that we obtain an optimal (balanced) error for
	the choice $\numDelay = \numBDF$.
\end{remark}

\subsection{Verification of the summation assumption}
\label{subsec:assVeri}

It remains to validate \Cref{ass:summation:bdf,ass:summation:bdf:niceEigvals}. 
Since our main goal is the error analysis for the decoupling integration scheme, we follow \Cref{rem:errorBalance} and consider $\numBDF = \numDelay$. 
%
%
\subsubsection{\texorpdfstring{Case $\numBDF=\numDelay=1$}{Case k=d=1}}
Rewriting \eqref{eq:summation:bdf} for $\numBDF=\numDelay=1$, we get 
\begin{align*}
	\langle y^{n} - &y^{n-1} + \mu ({y^{n-1}} - y^{n-2}), y^{n} - \eta y^{n-1}\rangle\\
	&\hspace{3em}= \left\langle \begin{bmatrix}g_{11} & g_{12}\\g_{12} & g_{22}\end{bmatrix}\begin{bmatrix}
	y^{n}\\
	y^{n-1}
\end{bmatrix},\begin{bmatrix}
	y^{n}\\
	y^{n-1}
\end{bmatrix}\right\rangle - \left\langle \begin{bmatrix}g_{11} & g_{12}\\g_{12} & g_{22}\end{bmatrix}\begin{bmatrix}
	y^{n-1}\\
	y^{n-2}
\end{bmatrix},\begin{bmatrix}
	y^{n-1}\\
	y^{n-2}
\end{bmatrix}\right\rangle \\
&\qquad\qquad\qquad+ \big\| \gamma_0 y^{n} + \gamma_1 y^{n-1} + \gamma_{2} y^{n-2}\big\|^2,
\end{align*}
which on rearranging gives
\begin{align*}
	&(g_{11} + \gamma_{0}^{2} - 1)\langle y^{n}, y^{n}\rangle +(\eta \mu - \eta - g_{11} + g_{22} + \gamma_{1}^{2})\langle y^{n-1}, y^{n-1}\rangle\nonumber\\
	&\quad + (- g_{22} + \gamma_{2}^{2})\langle y^{n-2}, y^{n-2}\rangle +(\eta + 2 g_{12} + 2 \gamma_{0} \gamma_{1} - \mu + 1)\langle y^{n}, y^{n-1}\rangle \nonumber\\
	&\qquad + (2 \gamma_{0} \gamma_{2} + \mu)\langle y^{n}, y^{n-2}\rangle + (- \eta \mu - 2 g_{12} + 2 \gamma_{1} \gamma_{2})\langle y^{n-1}, y^{n-2}\rangle
	= 0.
\end{align*}
Equating the coefficients of each distinct inner product to zero, we obtain the system of equations
\begin{subequations}
	\label{eqn:k1d1:full}
	\begin{align}
		g_{11} + \gamma_{0}^{2} &= 1, &
		- g_{11} + g_{22} + \gamma_{1}^{2} &= \eta(1 - \mu), &
		2 g_{12} + 2 \gamma_{0} \gamma_{1} &=  -1 + \mu -\eta,\label{eqn:k1d1:full:a}\\
		- g_{22} + \gamma_{2}^{2} &= 0, &
		2 \gamma_{0} \gamma_{2} &= -\mu, &
		- 2 g_{12} + 2 \gamma_{1} \gamma_{2} &= \eta \mu. \label{eqn:k1d1:full:b}
	\end{align}
\end{subequations}
Solving for the variables $g_{11}, g_{22}, g_{12}$ in terms of $\gamma_{0}, \gamma_{1}, \gamma_{2}$
from \eqref{eqn:k1d1:full:a}, we obtain
\begin{displaymath}
	G = \begin{bmatrix}
	1 - \gamma_{0}^{2} &  - \gamma_{0} \gamma_{1} + \tfrac{1}{2}(\mu -\eta -1)\\
	- \gamma_{0} \gamma_{1} + \tfrac{1}{2}(\mu -\eta -1) &
	- \gamma_{0}^{2} - \gamma_{1}^{2} + 1 - \eta \mu + \eta
	\end{bmatrix}
\end{displaymath}
and the $\gamma_i$'s have to be solved from
\begin{subequations}
	\label{eqn:k1d1}
	\begin{align}
		2 \gamma_{0} \gamma_{2} &= - \mu, \label{eqn:k1d1:a}\\
		2 \gamma_{0} \gamma_{1} + 2 \gamma_{1} \gamma_{2}
			&= \eta \mu - \eta + \mu - 1, \label{eqn:k1d1:b}\\
		\gamma_{0}^{2} + \gamma_{1}^{2} + \gamma_{2}^{2}
			&= 1 - \eta \mu + \eta.\label{eqn:k1d1:c}
	\end{align}
\end{subequations}
Summing up the equations in~\eqref{eqn:k1d1} yields
$(\sum_{i=0}^3 \gamma_i)^2 = 0$ and we conclude that 
\begin{equation}
	\label{eqn:k1d1:sumgamma}
	\gamma_0 + \gamma_1 + \gamma_2 = 0.
\end{equation}
From \eqref{eqn:k1d1:b} and~\eqref{eqn:k1d1:sumgamma} we further obtain
\begin{align*}
	\gamma_1^2 = \tfrac{1}{2}(1+\eta)(1-\mu).
\end{align*}
Hence, there cannot exist a (real-valued) solution for $\mu>1$, i.e., we recover the weak coupling condition from \cite{AltMU21b}.
From~\eqref{eqn:k1d1:a} and~\eqref{eqn:k1d1:sumgamma} we solve for $\gamma_0$ and $\gamma_2$ to obtain
\begin{align*}
	\gamma_0 &= - \tfrac{1}{2}\sqrt{\tfrac{1}{2}(1+\eta)(1 - \mu)} + \sqrt {\tfrac{1}{2}\mu + \tfrac{1}{8}(1+\eta)(1 - \mu)}, \\
	\gamma_2 &= - \tfrac{1}{2}\sqrt{\tfrac{1}{2}(1+\eta)(1 - \mu)} - \sqrt {\tfrac{1}{2}\mu + \tfrac{1}{8}(1+\eta)(1 - \mu)},\\
	\gamma_1 &= \sqrt{\tfrac{1}{2}(1+\eta)(1 - \mu)}
\end{align*}
as a solution. Setting $\eta=0$ in the above expressions, the eigenvalues of the $G$ matrix are
\begin{align*}
	\lambda_1 &= \tfrac{1}{4}\sqrt{1 - \mu} \sqrt{3 \mu + 1} - \tfrac{1}{4}\sqrt{(3 + \mu)(1-\mu) + 2 (1 - \mu) \sqrt{1 - \mu} \sqrt{3 \mu + 1}} + \tfrac{1}{2},\\
	\lambda_2 &= \tfrac{1}{4}\sqrt{1 - \mu} \sqrt{3 \mu + 1} + \tfrac{1}{4}\sqrt{(3 + \mu)(1-\mu) + 2 (1 - \mu) \sqrt{1 - \mu} \sqrt{3 \mu + 1}} + \tfrac{1}{2}
\end{align*}
and a necessary and sufficient condition for $\lambda_1, \lambda_2 \in \R_{+}$ is $0\leq \mu\leq 1$. We conclude that \Cref{ass:summation:bdf,ass:summation:bdf:niceEigvals} are satisfied.
We refer to \Cref{fig:bdfeigvals:k1} for a visualization of the eigenvalues. 
%
%
\subsubsection{\texorpdfstring{Case $\numBDF=\numDelay=2$}{Case k=d=2}}
Equating the coefficients of all distinct inner products in \eqref{eq:summation:bdf} results in a system with $15$ unknowns and $15$ equations (while treating $\mu$ and $\eta$ as free parameters). Similarly as in the previous subsection, we can write all entries of $G$ as functions of the $\gamma_i$. The details are given in \Cref{subsec:k2d2:G}. 
In turn, $\gamma_0,\ldots,\gamma_4$ have to satisfy the following five nonlinear equations
\begin{subequations}
	\label{eqn:k2d2}
	\begin{align}
		\label{eqn:k2d2:a}2\gamma_0\gamma_4 &= -\tfrac{1}{2}\mu,\\
		\label{eqn:k2d2:b}2\gamma_0\gamma_3 + 2\gamma_1\gamma_4 
		&= \tfrac{1}{2}\eta \mu + 3\mu,\\
		\label{eqn:k2d2:c}2\gamma_0\gamma_2 + 2\gamma_1\gamma_3 + 2\gamma_2\gamma_4
		&= - 3 \eta \mu -\tfrac{11}{2}\mu + \tfrac{1}{2},\\
		\label{eqn:k2d2:d}2\gamma_0\gamma_1 + 2\gamma_1\gamma_2
		+ 2\gamma_2\gamma_3 + 2\gamma_3\gamma_4
		&= \tfrac{11}{2}\eta \mu - 2 \eta + 3\mu - 2,\\
		\label{eqn:k2d2:e}\gamma_0^2 + \gamma_1^2 + \gamma_2^2
		+ \gamma_3^2 + \gamma_4^2 &= -3 \eta \mu + 2 \eta + \tfrac{3}{2}.
	\end{align}
\end{subequations}

\begin{lemma}
	Consider~\eqref{eqn:k2d2} and assume $\eta\geq 0$.
	Then,~\eqref{eqn:k2d2} is not solvable for $\mu > \tfrac{1}{3}$.
	For $\mu \leq \tfrac{1}{3}$ a solution is given by
	\begin{subequations}
		\label{eqn:k2d2:gammaSol}
	\begin{align}
		\gamma_0 &= \tfrac{\theta}{2} + \tfrac{\sqrt{\mu + \theta^{2}}}{2}, &
		\gamma_2 &= - \tfrac{3 \sqrt{1 - 3 \mu}}{4} + \tfrac{\sqrt{5 \mu + 1}}{4}, &
		\gamma_4 &= \tfrac{\theta}{2} - \tfrac{\sqrt{\mu + \theta^{2}}}{2}, \\
		\gamma_1 &= \tfrac{\sqrt{1 - 3 \mu}}{2} - \tfrac{3 \mu - \sqrt{1 - 3 \mu} \theta}{2 \sqrt{\mu + \theta^{2}}}, &
		\gamma_3 &= \tfrac{\sqrt{1 - 3 \mu}}{2} + \tfrac{3 \mu - \sqrt{1 - 3 \mu} \theta}{2 \sqrt{\mu + \theta^{2}}} &
		\eta &= 0
	\end{align}
	\end{subequations}
	with $\theta = -\tfrac{1}{4} \big( \sqrt{5\mu+1} + \sqrt{1-3\mu} \big)$.
	For this particular solution, the matrix $G$ is positive definite for every $\mu\in[0,\tfrac{1}{3}]$.
	Hence, \Cref{ass:summation:bdf,ass:summation:bdf:niceEigvals} are true for $\numBDF = \numDelay = 2$
	and~$\omega\leq \tfrac{1}{3}$.
\end{lemma}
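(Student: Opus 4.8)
The plan is to follow the same pattern as the $\numBDF=\numDelay=1$ analysis above, splitting the argument into a non-solvability part, a construction part, and a positive-definiteness part.

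\textbf{Non-solvability for $\mu>\tfrac13$.} First I would add all five equations in~\eqref{eqn:k2d2}. On the left, each unordered pair $\{i,j\}$ with $0\le i<j\le 4$ occurs exactly once with coefficient $2$ and each square $\gamma_i^2$ occurs once, so the sum collapses to $(\gamma_0+\gamma_1+\gamma_2+\gamma_3+\gamma_4)^2$; a direct check shows the right-hand sides cancel identically, whence
\begin{equation*}
	\gamma_0+\gamma_1+\gamma_2+\gamma_3+\gamma_4=0.
\end{equation*}
Next I would take the alternating combination \eqref{eqn:k2d2:e}${}-{}$\eqref{eqn:k2d2:d}${}+{}$\eqref{eqn:k2d2:c}${}-{}$\eqref{eqn:k2d2:b}${}+{}$\eqref{eqn:k2d2:a}, whose left-hand side equals $(\gamma_0-\gamma_1+\gamma_2-\gamma_3+\gamma_4)^2$ and whose right-hand side evaluates to $4(1+\eta)(1-3\mu)$. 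Using the first relation in the form $\gamma_0+\gamma_2+\gamma_4=-(\gamma_1+\gamma_3)$, the left-hand side equals $4(\gamma_1+\gamma_3)^2$, so that
\begin{equation*}
	(\gamma_1+\gamma_3)^2=(1+\eta)(1-3\mu).
\end{equation*}
For $\eta\ge 0$ the right-hand side is negative once $\mu>\tfrac13$, which rules out real solutions; this is the exact analogue of the identity $\gamma_1^2=\tfrac12(1+\eta)(1-\mu)$ from the case $\numBDF=\numDelay=1$.

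\textbf{Construction of a solution for $\mu\le\tfrac13$.} Fix $\eta=0$ and read~\eqref{eqn:k2d2} as a spectral-factorization problem for $P(\zeta)\vcentcolon=\sum_{i=0}^{4}\gamma_i\zeta^i$: the equations~\eqref{eqn:k2d2:a}--\eqref{eqn:k2d2:e} prescribe exactly the palindromic Laurent polynomial $P(\zeta)P(1/\zeta)$, equivalently the degree-four polynomial $\widetilde Q$ defined by $P(\zeta)P(1/\zeta)=\widetilde Q(\zeta+\zeta^{-1})$. Computing the coefficients, one finds the clean factorization
\begin{equation*}
	\widetilde Q(x)=(2-x)^2\,D(x),\qquad D(x)\vcentcolon=-\tfrac14\mu x^2+\tfrac12\mu x+\tfrac54\mu+\tfrac14.
\end{equation*}
Since $D$ is (for $\mu>0$) a downward parabola with vertex at $x=1$, its minimum on $[-2,2]$ is $D(-2)=\tfrac14(1-3\mu)$, so $\widetilde Q\ge 0$ on $[-2,2]$ precisely for $\mu\le\tfrac13$, re-deriving the threshold. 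For $\mu\le\tfrac13$ I would factor $D(\zeta+\zeta^{-1})=h(\zeta)h(1/\zeta)$ with $h(\zeta)=a\zeta^2+b\zeta+c$; the coefficients solve $ac=-\tfrac14\mu$, $2b(a+c)=\mu$, $(a+c)^2+b^2=\tfrac14(\mu+1)$, which on elimination gives $(a+c)^2=\theta^2$ with $\theta$ as in the statement, hence explicit $a,b,c$ (taking $a+c=\theta$, the choice that does not degenerate at $\mu=0$). Setting $P(\zeta)=(1-\zeta)^2h(\zeta)$ --- the double factor $(1-\zeta)^2$ absorbing $(2-x)^2$ --- and reading off coefficients yields, after simplifying with $\tfrac{\mu}{2\theta}=-\tfrac14(\sqrt{5\mu+1}-\sqrt{1-3\mu})$, precisely~\eqref{eqn:k2d2:gammaSol}; a direct substitution then confirms that these $\gamma_i$ satisfy all of~\eqref{eqn:k2d2}. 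Inserting them into the closed-form expressions of \Cref{subsec:k2d2:G} gives $G(\mu)\in\R^{4\times 4}$.

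\textbf{Positive definiteness and eigenvalue bounds.} On $[0,\tfrac13]$ the radicals $\sqrt{1-3\mu}$, $\sqrt{5\mu+1}$, $\sqrt{\mu+\theta^2}$ are real and $\theta\le-\tfrac14<0$, so $\mu\mapsto G(\mu)$ is continuous on this compact interval. To prove $G(\mu)\succ 0$ I would use Sylvester's criterion: verify that the four leading principal minors $\Delta_k(\mu)$, $k=1,\dots,4$, are positive at $\mu=0$ (a finite computation, where $\gamma=(0,0,-\tfrac12,1,-\tfrac12)$) and never vanish on $(0,\tfrac13)$; by continuity each $\Delta_k$ then keeps its sign and hence stays positive, so $G(\mu)\succ 0$ throughout. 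With this, \Cref{ass:summation:bdf} holds for $\numBDF=\numDelay=2$ and any $\omega\le\tfrac13$ (identity~\eqref{eq:summation:bdf} being built into the construction of $G$ and $\gamma$), and \Cref{ass:summation:bdf:niceEigvals} follows automatically: $\mu\mapsto\lambda_{\min}(G(\mu))$ and $\mu\mapsto\lambda_{\max}(G(\mu))$ are continuous on the compact set $[0,\omega]$ and pointwise positive and finite, so $c_G>0$ and $C_G<\infty$.

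\textbf{Main obstacle.} The conceptual steps are short, but the computational core is the verification that the four leading principal minors of $G(\mu)$ stay positive on $[0,\tfrac13]$. Because the $\gamma_i$, and hence $G$, contain nested square roots, the $\Delta_k$ are not polynomials in $\mu$; showing they do not change sign requires either a careful factorization or a rationalizing substitution (e.g.\ parametrizing by $\sqrt{1-3\mu}$ and $\sqrt{5\mu+1}$) followed by a polynomial root-isolation argument, which is the step I expect to need computer-algebra support and to be the most error-prone. The preceding bookkeeping --- deriving $\widetilde Q$, checking $\widetilde Q=(2-x)^2D$, and substituting~\eqref{eqn:k2d2:gammaSol} back into~\eqref{eqn:k2d2} --- is routine but lengthy and is likewise best done with a CAS.
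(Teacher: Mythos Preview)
Your argument is correct, and the non-solvability step is essentially the paper's: both derive $(\gamma_1+\gamma_3)^2=(1+\eta)(1-3\mu)$ from $\sum_i\gamma_i=0$, the only difference being that the paper sums \eqref{eqn:k2d2:b}+\eqref{eqn:k2d2:d} directly, whereas you take the alternating combination (i.e.\ evaluate $P(-1)^2$), which is an equivalent manipulation.

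Where you genuinely diverge is in the construction of the solution. The paper does not derive~\eqref{eqn:k2d2:gammaSol}; it simply \emph{verifies} each of the five equations~\eqref{eqn:k2d2} by direct substitution, working through several auxiliary identities for $\theta$ along the way. Your spectral-factorization route---writing the system as the prescription of $P(\zeta)P(1/\zeta)$, observing the factorization $\widetilde Q(x)=(2-x)^2D(x)$, and then solving the quadratic Fej\'er--Riesz problem for $D$---is more conceptual and explains \emph{where} the closed form comes from (the double root at $\zeta=1$ accounting for $(1-\zeta)^2$, and the remaining quadratic factor giving the $\theta$-dependent piece). It also yields the threshold $\mu\le\tfrac13$ a second time, as nonnegativity of $D$ on $[-2,2]$. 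The trade-off is that the paper's brute-force verification is self-contained and leaves no doubt that the stated $\gamma_i$ really do satisfy~\eqref{eqn:k2d2}, whereas your route still needs the back-substitution check you mention.

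For the positive-definiteness part, you are not missing anything the paper supplies: the paper also delegates this to a computer, plotting the numerically computed eigenvalues of $G(\mu)$ (\Cref{fig:bdfeigvals:k2}) and observing that they stay bounded away from zero. Your Sylvester-plus-continuity plan is in the same spirit but more structured; the obstacle you flag (showing the principal minors never vanish through the nested radicals) is exactly the step the paper sidesteps by appealing to the figure.
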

\begin{proof}
	Summing up the equations in~\eqref{eqn:k2d2} yields
	$(\sum_{i=0}^4 \gamma_i)^2 = 0$ and, hence, $\sum_{i=0}^4 \gamma_i = 0$.
	Factoring the sum of equations \eqref{eqn:k2d2:b} and \eqref{eqn:k2d2:d} gives
	\begin{align*}
		2(\gamma_1 + \gamma_3)(\gamma_0 + \gamma_2 + \gamma_4)
		&= 6\eta\mu + 6\mu -2\eta - 2 = 2(1+\eta)(3\mu - 1).
	\end{align*}
	Together with the fact that the $\gamma_i$'s sum up to zero, we obtain
	\begin{align*}
		(\gamma_1+\gamma_3)^2 = (1+\eta)(1-3\mu).
	\end{align*}
	We conclude that a (real-valued) solution cannot exist for
	$\mu>\tfrac{1}{3}$, which establishes the first claim.
	The second claim is obtained by directly verifying~\eqref{eqn:k2d2}.
	In more detail, we immediately see that~\eqref{eqn:k2d2:gammaSol}
	satisfies~\eqref{eqn:k2d2:a}. For~\eqref{eqn:k2d2:b}, we compute
	\begin{align*}
		2\gamma_0\gamma_3 + 2\gamma_1\gamma_4
		&= 4\tfrac{\theta}{2}\tfrac{\sqrt{1-3\mu}}{2}
		+ 4\tfrac{\sqrt{\mu + \theta^{2}}}{2}\tfrac{3\mu -
		\theta\sqrt{1-3\mu}}{2 \sqrt{\mu + \theta^{2}}} = 3\mu.
	\end{align*}
	Observe that
	\begin{align*}
		\frac{(3\mu - \theta\sqrt{1 - 3 \mu})^2}{\mu + \theta^2}
		 &= \frac{
			\tfrac{1}{8}(1 + 10\mu + 33\mu^2) + \tfrac{1}{8}(1+9\mu)\sqrt{1+5\mu}\sqrt{1-3\mu}
			}{
				\tfrac{1}{8}(1+9\mu) + \tfrac{1}{8}\sqrt{1+5\mu}\sqrt{1-3\mu}
			}\\
		 &= \frac{
			\mu(1+6\mu)(1+9\mu) + \mu(1+6\mu)\sqrt{1+5\mu}\sqrt{1-3\mu}
		 }{
			2\mu(1+6\mu)
		 }\\
		 &= \tfrac{1}{2}(1+9\mu) + \tfrac{1}{2}\sqrt{1+5\mu}\sqrt{1-3\mu}.
	\end{align*}
	Towards~\eqref{eqn:k2d2:c}, we obtain
	\begin{align*}
		(\gamma_0 + \gamma_4)\gamma_2 
		&= -\tfrac{3\theta \sqrt{1 - 3 \mu}}{4} 
		+\tfrac{\theta\sqrt{5 \mu + 1}}{4}
		= \tfrac{1 - 7 \mu}{8}
		+ \tfrac{\sqrt{1 - 3 \mu} \sqrt{5 \mu + 1}}{8},\\
		\gamma_1\gamma_3 &= \tfrac{1-3\mu}{4} - \tfrac{(3\mu - \theta\sqrt{1 - 3 \mu})^2}{4(\mu+\theta^2)}
		= -\tfrac{\sqrt{1 - 3 \mu} \sqrt{5 \mu + 1}}{8} +\tfrac{1-15\mu}{8},\\
		\gamma_0\gamma_2 + \gamma_1\gamma_3 + \gamma_2\gamma_4
		&= \tfrac{1-7\mu}{8} + \tfrac{1-15\mu}{8}= \tfrac{1}{4} - \tfrac{11}{4}\mu.
	\end{align*}
	Towards \eqref{eqn:k2d2:d}, we have
	\begin{align*}
		\gamma_2(\gamma_1 + \gamma_3) + \gamma_0\gamma_1 + \gamma_3\gamma_4
		&= (- \tfrac{3 \sqrt{1 - 3 \mu}}{4} + \tfrac{\sqrt{5 \mu + 1}}{4})\sqrt{1-3\mu}
		+ \tfrac{\theta}{2}\sqrt{1 - 3 \mu} - \tfrac{3\mu - \theta\sqrt{1 - 3 \mu}}{2}\\
		&= (- \sqrt{1 - 3 \mu} - \theta)\sqrt{1-3\mu} + \tfrac{\theta}{2}\sqrt{1 - 3 \mu}
		- \tfrac{3\mu - \theta\sqrt{1 - 3 \mu}}{2}\\
		&= -1 + \tfrac{3}{2}\mu.
	\end{align*}
	Finally, for \eqref{eqn:k2d2:e}, we have
	\begin{align*}
		\sum_{i=0}^{4}\gamma_i^{2} &= 2(\tfrac{\theta^2}{4} + \tfrac{\mu + \theta^{2}}{4})
		+ 2(\tfrac{1 - 3 \mu}{4} + \tfrac{(3\mu - \theta^2\sqrt{1 - 3 \mu})^2}{4(\mu + \theta^{2})})
		+ (- \tfrac{3 \sqrt{1 - 3 \mu}}{4} + \tfrac{\sqrt{5 \mu + 1}}{4})^2\\
		&= \tfrac{\mu}{2} + \tfrac{1}{16}(1 - 3 \mu + 1 + 5\mu + 2\sqrt{1 - 3 \mu} \sqrt{5 \mu + 1}) \\
		&\qquad + \tfrac{1 - 3 \mu}{2}
		+ \tfrac{1}{4}(1+9\mu) + \tfrac{1}{4}\sqrt{1+5\mu}\sqrt{1-3\mu} \\
		&\qquad\quad+ \tfrac{1}{16}(9 - 27 \mu + 1 + 5\mu - 6\sqrt{1 - 3 \mu} \sqrt{5 \mu + 1})\\
		&= \tfrac{1}{2} - \mu + \tfrac{1}{16}(12 - 20\mu) + \tfrac{1}{4}(1+9\mu) = \tfrac{3}{2}.
	\end{align*}
For the positive definiteness of the $G$ matrix,
we substitute the above analytical expressions for the $\gamma_i$'s
and numerically compute the eigenvalues of the parametrized $G$ matrix w.r.t.~$\mu$.
One can see in~\Cref{fig:bdfeigvals:k2} that the eigenvalues are all bounded from above and away from zero.
\end{proof}
\begin{figure}[ht]
	\centering
	\ref*{legEigvals}\\
	\begin{subfigure}[t]{.33\linewidth}
		\centering
		\begin{tikzpicture}
    \begin{axis}[
    width=2in,
    height=2in,
    xlabel={$\mu$},
    ymin=0, ymax=1.5,
    xmin=0,
    grid=major,
    cycle list name=summationsol,
    ]
    \addplot+[domain=0:1, samples=101] {(1/4)*sqrt(1 - x) * sqrt(3 *x + 1) - (1/4) * sqrt((3 + x)*(1-x) + 2 * (1 - x) * sqrt(1 - x) * sqrt(3 * x + 1)) + (1/2)};
    \addplot+[domain=0:1, samples=101] {(1/4)*sqrt(1 - x) * sqrt(3 *x + 1) + (1/4) * sqrt((3 + x)*(1-x) + 2 * (1 - x) * sqrt(1 - x) * sqrt(3 * x + 1)) + (1/2)};
    \addplot+[dashed, thick, black] coordinates {(1.0, 0.0) (1.0, 1.5)};
\end{axis}
\end{tikzpicture}
		\caption{$\numBDF=1$}
		\label{fig:bdfeigvals:k1}
	\end{subfigure}\hfill
	\begin{subfigure}[t]{.33\linewidth}
		\centering
		\begin{tikzpicture}
    \begin{axis}[
    width=2in,
    height=2in,
    xlabel={$\mu$},
    xmin = 0,
    ymin=1e-3, ymax=8,
    ymode={log},
    grid=major,
    cycle list name=summationsol,
    ]
    \addplot+ table[x=omega, y=lambda_1] {df_bdf2.dat};
    \addplot+ table[x=omega, y=lambda_2] {df_bdf2.dat};
    \addplot+ table[x=omega, y=lambda_3] {df_bdf2.dat};
    \addplot+ table[x=omega, y=lambda_4] {df_bdf2.dat};
    \addplot+[dashed, thick, black] coordinates {(1.0/3.0, 1.0e-3) (1.0/3.0, 10)};
\end{axis}
\end{tikzpicture}
		\caption{$\numBDF=2$}
		\label{fig:bdfeigvals:k2}
	\end{subfigure}\hfill
	\begin{subfigure}[t]{.33\linewidth}
		\centering
		\begin{tikzpicture}
    \begin{axis}[
    width=2in,
    height=2in,
    xlabel={$\mu$},
    xmin=0, xmax=0.16,
    ymin=1e-3, ymax=10,
    xtick={0.0, 0.05, 0.1, 0.15, 0.2},
    xticklabels={$0$, $0.05$, $0.1$, $0.15$, $0.2$},
    ymode={log},
    grid=major,
    legend entries={$\lambda_1$, $\lambda_2$, $\lambda_3$, $\lambda_4$, $\lambda_5$, $\lambda_6$}, 
    cycle list name=summationsol,
    legend cell align={left},
    legend style={
      /tikz/every even column/.append style={column sep=0.5cm},
      font=\small,
    },
    legend columns=6,
    legend to name=legEigvals, 
    ]
    \addplot+ table[x=omega, y=lambda_1] {df_bdf3.dat};
    \addplot+ table[x=omega, y=lambda_2] {df_bdf3.dat};
    \addplot+ table[x=omega, y=lambda_3] {df_bdf3.dat};
    \addplot+ table[x=omega, y=lambda_4] {df_bdf3.dat};
    \addplot+ table[x=omega, y=lambda_5] {df_bdf3.dat};
    \addplot+ table[x=omega, y=lambda_6] {df_bdf3.dat};
    \addplot+[dashed, thick, black] coordinates {(1.0/7.0, 1e-15) (1.0/7.0, 100)};
\end{axis}
\end{tikzpicture}
		\caption{$\numBDF=3$}
		\label{fig:bdfeigvals:k3}
	\end{subfigure}
	\caption{Eigenvalues of $G(\mu)$ for different $\numBDF$. The dashed lines indicate the critical values from \Cref{th:mainResult}, i.e., $\omega=1$, $\omega=\tfrac{1}{3}$, and $\omega=\tfrac{1}{7}$.}
	\label{fig:bdfeigvals}
\end{figure}
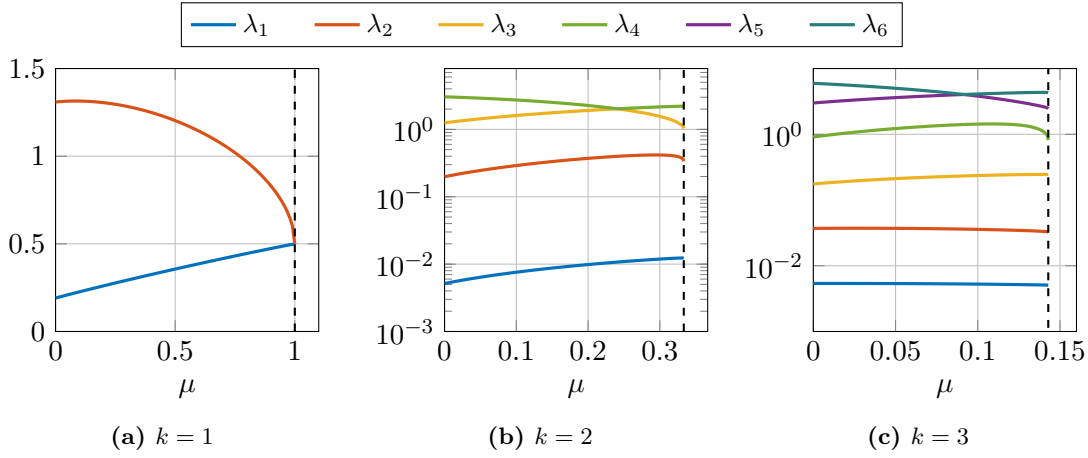

\begin{remark}
	The condition $\mu\leq \tfrac{1}{3}$ improves upon the existing bound $\mu\leq \tfrac{1}{5}$ used in the convergence proof in \cite{AltMU24} and is in agreement with the asymptotic stability analysis for the delay equation. We refer to  \cite[Sec.~3.5]{AltMU24} for further details.
\end{remark}

\begin{remark}
	There exists other solutions of~\eqref{eqn:k2d2} such that the eigenvalues of the $G$ matrix are not bounded away from zero, namely 
	\begin{align*}
		\begin{aligned}
		\gamma_0 &= \tfrac{\theta}{2} + \tfrac{\sqrt{\mu + \theta^{2}}}{2}, &
		\gamma_2 &= - \tfrac{3 \sqrt{1 - 3 \mu}}{4} - \tfrac{\sqrt{5 \mu + 1}}{4}, &
		\gamma_4 &= \tfrac{\theta}{2} - \tfrac{\sqrt{\mu + \theta^{2}}}{2}, \\
		\gamma_1 &= \tfrac{\sqrt{1 - 3 \mu}}{2} + \tfrac{\theta\sqrt{1 - 3 \mu} - 3\mu}{2 \sqrt{\mu + \theta^{2}}}, &
		\gamma_3 &= \tfrac{\sqrt{1 - 3 \mu}}{2} - \tfrac{\theta\sqrt{1 - 3 \mu} - 3\mu}{2 \sqrt{\mu + \theta^{2}}}, &
		\eta &= 0
		\end{aligned}
	\end{align*}
	with $\theta=\tfrac{1}{4} \big( \sqrt{5\mu+1} - \sqrt{1-3\mu} \big)$.
\end{remark}
%
\subsubsection{\texorpdfstring{Case $\numBDF=\numDelay=3$}{Case k=d=3}}
Equating the coefficients of all distinct inner products in \eqref{eq:summation:bdf} results in a system of $28$ unknowns and $28$ equations, again treating $\mu$ and $\eta$ as free parameters.
For~\eqref{eq:summation:bdf} to hold, we obtain $G$ as in~\eqref{eqn:k3d3:G} such that $\gamma_i$, $0\le i\le 6$, have to satisfy the following (nonlinear) system of equations
\begin{subequations}
	\label{eqn:k3}
	\begin{align}
		\label{eqn:k3:a} 2 \gamma_{0} \gamma_{6} &= - \tfrac{1}{3}\mu,\\
		\label{eqn:k3:b} 2 \gamma_{0} \gamma_{5} + 2 \gamma_{1} \gamma_{6}
		&= \tfrac{1}{3}\eta \mu + \tfrac{5}{2}\mu,\\
		\label{eqn:k3:c} 2 \gamma_{0} \gamma_{4} + 2 \gamma_{1} \gamma_{5}
		+ 2 \gamma_{2} \gamma_{6} 
		&= -\tfrac{5}{2}\eta \mu - \tfrac{17}{2}\mu,\\
		\label{eqn:k3:d} 2 \gamma_{0} \gamma_{3} + 2 \gamma_{1} \gamma_{4}
		+ 2 \gamma_{2} \gamma_{5} + 2 \gamma_{3} \gamma_{6}
		&= \tfrac{17}{2}\eta \mu + \tfrac{46}{3}\mu - \tfrac{1}{3},\\
		\label{eqn:k3:e} 2 \gamma_{0} \gamma_{2} + 2 \gamma_{1} \gamma_{3}
		+ 2 \gamma_{2} \gamma_{4} + 2 \gamma_{3} \gamma_{5}
		+ 2 \gamma_{4} \gamma_{6} &= - \tfrac{46}{3}\eta \mu
		+ \tfrac{1}{3}\eta - \tfrac{29}{2}\mu + \tfrac{3}{2},\\
		\label{eqn:k3:f} 2 \gamma_{0} \gamma_{1} + 2 \gamma_{1} \gamma_{2}
		+ 2 \gamma_{2} \gamma_{3} + 2 \gamma_{3} \gamma_{4}
		+ 2 \gamma_{4} \gamma_{5} + 2 \gamma_{5} \gamma_{6}
		&= \tfrac{29}{2}\eta \mu - \tfrac{10}{3}\eta + \tfrac{11}{2}\mu - 3,\\
		\label{eqn:k3:g} \gamma_{0}^{2} + \gamma_{1}^{2} + \gamma_{2}^{2}
		+ \gamma_{3}^{2} + \gamma_{4}^{2} + \gamma_{5}^{2} + \gamma_{6}^{2}
		&= - \tfrac{11}{2}\eta \mu + 3 \eta + \tfrac{11}{6}.
	\end{align}
\end{subequations}
\begin{lemma}
	Consider~\eqref{eqn:k3} and assume $\eta\geq 0$.
	Then,~\eqref{eqn:k3} is not solvable for $\mu > \tfrac{1}{7}$.
	For $\mu \leq \tfrac{1}{7}$, \eqref{eqn:k3} is solvable and there exist a positive definite matrix $G$ for this solution.
	Hence, \Cref{ass:summation:bdf,ass:summation:bdf:niceEigvals} are true for $\numBDF = \numDelay = 3$
	and $\omega\leq \tfrac{1}{7}$.
\end{lemma}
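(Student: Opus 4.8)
The plan is to mirror the two-stage argument already used for the $\numBDF=\numDelay=2$ case treated above: first isolate the obstruction $\mu\le\tfrac{1}{7}$ from two linear combinations of the equations in~\eqref{eqn:k3}, and then exhibit a real solution together with a positive definite weight $G$ on the admissible range, verifying positive definiteness numerically.

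\textbf{Non-solvability for $\mu>\tfrac{1}{7}$.} Summing all seven equations~\eqref{eqn:k3:a}--\eqref{eqn:k3:g}, the left-hand sides collapse to $\big(\sum_{i=0}^{6}\gamma_i\big)^2$ — each diagonal term $\gamma_i^2$ comes from~\eqref{eqn:k3:g} and each off-diagonal pair $2\gamma_i\gamma_j$ occurs exactly once among~\eqref{eqn:k3:a}--\eqref{eqn:k3:f} — while the right-hand sides cancel identically in $\mu$ and $\eta$ by the way the scheme is constructed; hence $\sum_{i=0}^{6}\gamma_i=0$. Next I would add the three odd-offset equations~\eqref{eqn:k3:b}, \eqref{eqn:k3:d}, and~\eqref{eqn:k3:f}, whose left-hand sides collect precisely the products $\gamma_i\gamma_j$ with $i+j$ odd, i.e.\ $2(\gamma_0+\gamma_2+\gamma_4+\gamma_6)(\gamma_1+\gamma_3+\gamma_5)$; using $\sum_{i=0}^{6}\gamma_i=0$ this equals $-2(\gamma_1+\gamma_3+\gamma_5)^2$, while the corresponding right-hand sides sum to $\tfrac{10}{3}(1+\eta)(7\mu-1)$. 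Therefore
\begin{displaymath}
	(\gamma_1+\gamma_3+\gamma_5)^2 = \tfrac{5}{3}\,(1+\eta)(1-7\mu),
\end{displaymath}
which admits no real solution once $\mu>\tfrac{1}{7}$ and $\eta\ge 0$. This proves the first assertion and pins down the threshold $\omega=\tfrac{1}{7}$.

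\textbf{Solvability and $G\succ 0$ for $\mu\le\tfrac{1}{7}$.} I would set $\eta=0$ and use the two relations above, which fix $\gamma_1+\gamma_3+\gamma_5=\sqrt{\tfrac{5}{3}(1-7\mu)}$ and $\gamma_0+\gamma_2+\gamma_4+\gamma_6=-\sqrt{\tfrac{5}{3}(1-7\mu)}$, together with $\gamma_0\gamma_6=-\tfrac{1}{6}\mu$ from~\eqref{eqn:k3:a}; the remaining equations can then be resolved by the same inward elimination as for $\numBDF=2$ (expressing the outer pair $\gamma_0,\gamma_6$ as roots of a quadratic with known sum and product, then $\gamma_1,\gamma_5$, then $\gamma_2,\gamma_4$, then $\gamma_3$, with~\eqref{eqn:k3:g} satisfied automatically), most conveniently with computer-algebra support. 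A cleaner route to solvability is to observe that~\eqref{eqn:k3} is exactly the real spectral-factorization problem $P(\zeta)P(\zeta^{-1})=R(\zeta)$ for $P(\zeta)=\sum_{i=0}^{6}\gamma_i\zeta^i$, where $R$ is the self-reciprocal, real trigonometric polynomial whose coefficients are prescribed by the right-hand sides of~\eqref{eqn:k3}; since $R\ge 0$ on $|\zeta|=1$ precisely for $\mu\le\tfrac{1}{7}$ — this being the asymptotic-stability condition of the delay equation, cf.~\cite[Sec.~3.5]{AltMU24} — the Fej\'er--Riesz theorem supplies a real polynomial factor $P$, i.e.\ a real solution $\gamma$. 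Substituting the resulting $\gamma_i(\mu)$ into the closed-form entries of $G$ from~\eqref{eqn:k3d3:G} and computing the six eigenvalues of $G(\mu)$ over $\mu\in[0,\tfrac{1}{7}]$ then shows, as displayed in~\Cref{fig:bdfeigvals:k3}, that they stay bounded above and away from zero, which verifies~\Cref{ass:summation:bdf,ass:summation:bdf:niceEigvals} for $\numBDF=\numDelay=3$ and $\omega\le\tfrac{1}{7}$.

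\textbf{Main obstacle.} The hard part is the explicit resolution of the seven coupled quadratics~\eqref{eqn:k3}: already for $\numBDF=2$ the closed form is cumbersome, and for $\numBDF=3$ I expect the branch selection (signs of the nested square roots) and the verification of~\eqref{eqn:k3:c}--\eqref{eqn:k3:f} to be tractable only with symbolic computation. The conceptually delicate point is establishing that the trigonometric polynomial $R$ stays nonnegative on the unit circle exactly up to $\mu=\tfrac{1}{7}$, so that the clean threshold is genuinely sharp and consistent with~\Cref{fig:bdfeigvals:k3}; and, as in the lower-order cases, the positive definiteness of $G$ on $[0,\tfrac{1}{7}]$ is obtained numerically rather than from a closed-form eigenvalue estimate.
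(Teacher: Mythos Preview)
Your non-solvability argument for $\mu>\tfrac{1}{7}$ is correct and essentially identical to the paper's: sum all seven equations to get $\sum_{i}\gamma_i=0$, then sum the odd-offset equations~\eqref{eqn:k3:b}, \eqref{eqn:k3:d}, \eqref{eqn:k3:f}, factor, and read off $(\gamma_1+\gamma_3+\gamma_5)^2=\tfrac{5}{3}(1+\eta)(1-7\mu)$.

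The genuine gap is your choice $\eta=0$ in the solvability part. Unlike \BDF-$1$ and \BDF-$2$, \BDF-$3$ is not A-stable, so a nonzero Nevanlinna--Odeh multiplier is essential even at $\mu=0$. Concretely, with $\eta=0$ and $\mu=0$ the right-hand sides of~\eqref{eqn:k3} prescribe the trigonometric polynomial
\[
R(e^{i\theta}) \;=\; \tfrac{11}{6} - 3\cos\theta + \tfrac{3}{2}\cos 2\theta - \tfrac{1}{3}\cos 3\theta,
\]
which satisfies $R(1)=0$ but has Taylor expansion $R(e^{i\theta}) = -\tfrac{1}{4}\theta^4 + \cO(\theta^6)$ near $\theta=0$. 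Hence $R$ is strictly negative in a punctured neighbourhood of $\theta=0$, and since any real solution of~\eqref{eqn:k3} would force $R(e^{i\theta})=|P(e^{i\theta})|^2\ge 0$, system~\eqref{eqn:k3} with $\eta=0$ has \emph{no real solution} at $\mu=0$ (and by continuity none for all small $\mu>0$). Your Fej\'er--Riesz route therefore cannot go through with $\eta=0$; the assertion that $R\ge 0$ on the unit circle ``precisely for $\mu\le\tfrac{1}{7}$'' is false for this~$\eta$, and the appeal to \cite[Sec.~3.5]{AltMU24} does not supply it.

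The paper handles this by taking $\eta=0.12$ (see the caption of \Cref{fig:bdf3:resnormev}) and solving~\eqref{eqn:k3} for $\gamma$ purely numerically across $\mu\in[0,\tfrac{1}{7}]$, reporting residual norms, and then checking numerically that the resulting $G(\mu)$ from~\eqref{eqn:k3d3:G} has eigenvalues bounded away from zero (\Cref{fig:bdfeigvals:k3}). Your Fej\'er--Riesz viewpoint is a nice conceptual addition the paper does not make; to salvage it you would first have to choose $\eta>0$ large enough to render $R\ge 0$ at $\mu=0$ and then verify that the same $\eta$ keeps $R\ge 0$ over the whole range $\mu\in[0,\tfrac{1}{7}]$ --- after which positive definiteness of $G$ is still a separate (numerical) check, as in the paper.
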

\begin{proof}
	Summing up the equations in~\eqref{eqn:k3} yields again $(\sum_{i=0}^6 \gamma_i)^2 = 0$ and $\sum_{i=0}^6 \gamma_i = 0$. 
	Now consider the sum of \cref{eqn:k3:b,eqn:k3:d,eqn:k3:f}, since these contain terms with product of $\gamma_i$'s with odd and even indices, which on factoring gives
	\begin{align*}
		2\, (\gamma_1 + \gamma_3 + \gamma_5)(\gamma_0 + \gamma_2 + \gamma_4 + \gamma_6) &= (\tfrac{1}{3} + \tfrac{17}{2} + \tfrac{29}{2})\eta\mu + (\tfrac{5}{2} + \tfrac{46}{3} + \tfrac{11}{2})\mu -\tfrac{10}{3}\eta + (-\tfrac{1}{3}-3)\nonumber\\
		&= \tfrac{70}{3}\eta\mu + \tfrac{70}{3}\mu -\tfrac{10}{3}\eta -\tfrac{10}{3}\nonumber\\
		&= \tfrac{10}{3}(1+\eta)(7\mu - 1).
	\end{align*}
	This yields
	\begin{align*}
		(\gamma_1+\gamma_3+\gamma_5)^2 
		= \tfrac{5}{3}\, (1+\eta)(1-7\mu),
	\end{align*}
	which establishes the first claim.
	For the second claim, we verify numerically that~\eqref{eqn:k3} is solvable; see~\Cref{fig:bdf3:resnormev} for the computed $\gamma_0,\ldots,\gamma_6$ and the residual norm for solving system~\eqref{eqn:k3} numerically. The maximum of the residual norms is several orders of magnitude smaller than the minimum eigenvalue, which is reported in \Cref{fig:bdfeigvals:k3}. Hence, the computed eigenvalues are numerically reliable and bounded away from zero, i.e., the matrix $G$ is positive definite.
\end{proof}

\begin{figure}[ht]
	\centering
	\ref*{legGammas3}\\[.3em]
	\begin{tikzpicture}
    \begin{axis}[
    width=2.5in,
    height=2.5in,
    xlabel={$\mu$},
    xmin=0, xmax=0.16,
    ymin=1.1e-16, ymax=5.0e-8,
    xtick={0.0, 0.05, 0.1, 0.15, 0.2},
    xticklabels={$0$, $0.05$, $0.1$, $0.15$, $0.2$},
    ymode={log},
    grid=major,
    cycle list name=summationsol,
    ]
    \addplot+ table[x=omega, y=resnorm] {df_bdf3_resnorms.dat};
    \addplot+[dashed, thick, black] coordinates {(1.0/7.0, 1.0e-17) (1.0/7.0, 1.0e-2)};
\end{axis}
\end{tikzpicture}
	\quad
	\begin{tikzpicture}
    \begin{axis}[
    width=2.5in,
    height=2.5in,
    xlabel={$\mu$},
    xmin=0, xmax=0.16,
    ymin=-1.3, ymax=1.2,
    xtick={0.0, 0.05, 0.1, 0.15, 0.2},
    xticklabels={$0$, $0.05$, $0.1$, $0.15$, $0.2$},
    grid=major,
    legend style={
      /tikz/every even column/.append style={column sep=0.5cm},
      font=\small,
    },
    legend columns=8,
    legend to name=legGammas3, 
    legend entries={$\gamma_0$, $\gamma_1$, $\gamma_2$, $\gamma_3$, $\gamma_4$, $\gamma_5$, $\gamma_6$, $\mu=\tfrac{1}{7}$},
    cycle list name=summationsol,
    ]
    \addplot+ table[x=omega, y=gamma_0] {df_bdf3_gammas.dat};
    \addplot+ table[x=omega, y=gamma_1] {df_bdf3_gammas.dat};
    \addplot+ table[x=omega, y=gamma_2] {df_bdf3_gammas.dat};
    \addplot+ table[x=omega, y=gamma_3] {df_bdf3_gammas.dat};
    \addplot+ table[x=omega, y=gamma_4] {df_bdf3_gammas.dat};
    \addplot+ table[x=omega, y=gamma_5] {df_bdf3_gammas.dat};
    \addplot+ table[x=omega, y=gamma_6] {df_bdf3_gammas.dat};
    \addplot+[dashed, thick, black] coordinates {(1.0/7.0, -2) (1.0/7.0, 2)};
\end{axis}
\end{tikzpicture}
	\caption{Residual norm (left) for solving the system for $\gamma$
	for \BDF-$3$ with $\eta=0.12$ and the corresponding solutions $\gamma$ (right).
	The dashed lines indicate the critical value $\omega=\tfrac{1}{7}$ from \Cref{th:mainResult}.}
	\label{fig:bdf3:resnormev}
\end{figure}
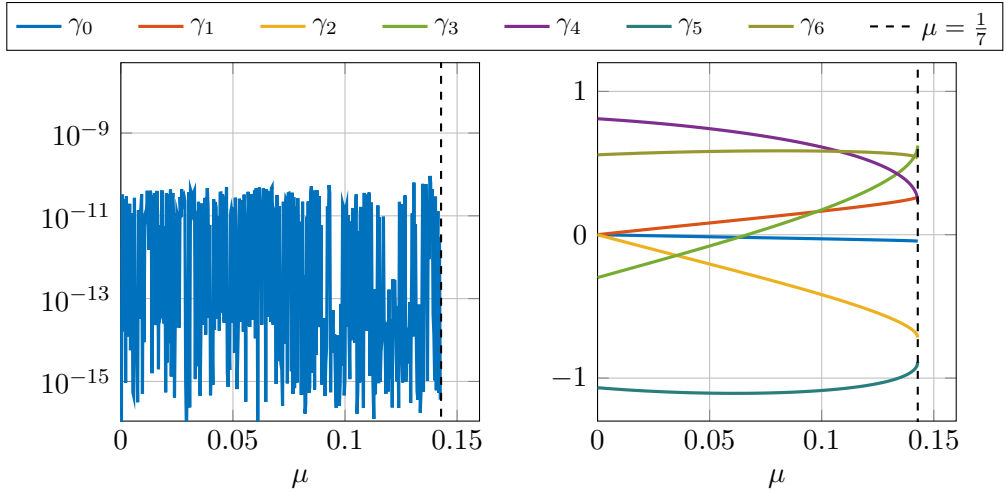

%
%
\subsection{Summary}

We summarize the previous subsections concerning the $\numBDF$-step schemes
(with $\numBDF = \numDelay$) in the upcoming \Cref{th:mainResult}.
In the formulation including $p$ and $u$, these schemes read for $\numBDF = 1$:
\begin{align*}
	\cA u^n - \cD^{*} p^{n-1} &= f^{n},\\
	\tfrac{1}{\tau}\cD (u^{n} - u^{n-1}) 
	+ \tfrac{1}{\tau}\cC (p^{n} - p^{n-1}) + \cB p^{n} &= g^{n},
\end{align*}
for $\numBDF = 2$:
\begin{align*}
	\cA u^n - \cD^{*} (2 p^{n-1} - p^{n-2}) &= f^{n},\\
	\tfrac{1}{\tau}\cD (\tfrac{3}{2}u^{n} - 2 u^{n-1} + \tfrac{1}{2}u^{n-2}) 
	 + \tfrac{1}{\tau}\cC (\tfrac{3}{2}p^{n} - 2 p^{n-1} + \tfrac{1}{2}p^{n-2})
	 + \cB p^{n} &= g^{n},
\end{align*}
and, for $\numBDF = 3$:
\begin{align*}
	\cA u^n - \cD^{*} (3 p^{n-1} - 3 p^{n-2} + p^{n-3})	&= f^{n},\\
	\tfrac{1}{\tau}\cD\, (\tfrac{11}{6}u^{n} - 3 u^{n-1}
	+ \tfrac{3}{2}u^{n-2} - \tfrac{1}{3}u^{n-3})\quad\qquad\qquad\qquad\qquad\qquad&\\
	+ \tfrac{1}{\tau}\cC\, (\tfrac{11}{6}p^{n} - 3 p^{n-1}
	+ \tfrac{3}{2}p^{n-2} - \tfrac{1}{3}p^{n-3})
	+ \cB p^{n} &= g^{n}.
\end{align*}

\begin{theorem}
\label{th:mainResult}
	Assume that one of the following applies:
	\begin{enumerate}[itemsep=0.3em]
		\item $\numBDF = 1$ and $\omega \leq 1$,
		\item $\numBDF = 2$ and $\omega \leq \tfrac{1}{3}$,
		\item $\numBDF = 3$ and $\omega \leq \tfrac{1}{7}$.
	\end{enumerate}
	Then \Cref{ass:summation:bdf,ass:summation:bdf:niceEigvals} are true for $\numDelay = \numBDF$. 
	Hence, for sufficiently smooth right-hand sides $f, g$ and consistent initial data, the solution of the original system~\eqref{eq:ppde} and its $k$-th order \BDF approximation given by~\eqref{eqn:par:opt:delay:bdf} satisfy 
	\begin{displaymath}
		\Vert u^{n} - {u}(t^{n}) \Vert^{2}_{\cV} + 
		\Vert p^{n} - {p}(t^{n}) \Vert^{2}_{\cHQ}
		\lesssim \tau^{2\numBDF}
		+ \sum_{\ell=0}^{k-1}\Vert p^{\ell} - {p}(t^{\ell})\Vert_{\cHQ}^{2}.
	\end{displaymath}
\end{theorem}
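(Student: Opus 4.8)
The statement is essentially a corollary of the results established earlier in this section, so the plan is to check that the hypotheses of \Cref{thm:BDFdelay} are met in each of the three cases and then quote \Cref{thm:BDFdelay} together with \Cref{prop:distanceDelay}, as summarized in \Cref{rem:errorBalance}. First I would fix $\numDelay=\numBDF$ and recall that \Cref{subsec:assVeri} already verifies \Cref{ass:summation:bdf,ass:summation:bdf:niceEigvals}: for $\numBDF=1$ the elementary computation around~\eqref{eqn:k1d1} produces an explicit $\gamma$ together with a symmetric positive definite $G(\mu)$ whose eigenvalues stay bounded away from $0$ and from $\infty$ precisely for $\mu\in[0,1]$; for $\numBDF=2$ and $\numBDF=3$ the two lemmas do the same on $[0,\tfrac13]$ and $[0,\tfrac17]$, respectively. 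Under the stated bound on $\omega$ the interval $[0,\omega]$ therefore lies inside the admissible parameter range, so both assumptions hold with $\numDelay=\numBDF$.

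With the assumptions in force I would first invoke \Cref{thm:BDFdelay} for the pressure of the delay problem, bounding $\Vert p^{n}-{\bar p}(t^{n})\Vert_{\cHQ}^{2}$ (and the weighted sum of these errors) by $\tau^{2\numBDF}$ up to the contribution of the $\numBDF$ starting values, where ${\bar p}$ solves~\eqref{eqn:par:opt:delay}. Next I would apply \Cref{prop:distanceDelay} with $\numDelay=\numBDF$ and a history function satisfying~\eqref{eq:histFunc}; this controls $\Vert{\bar p}(t^{n})-p(t^{n})\Vert_{\cHQ}^{2}$ (indeed already in the $\cQ$-norm, and simultaneously $\Vert{\bar u}(t^{n})-u(t^{n})\Vert_{\cV}^{2}$) by $\tau^{2\numDelay}=\tau^{2\numBDF}$ --- the choice $\numDelay=\numBDF$ is exactly what makes the two orders $\tau^{2\numDelay}$ and $\tau^{2\numBDF}$ in \Cref{rem:errorBalance} coincide. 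A triangle inequality then upgrades this to the claimed bound for $\Vert p^{n}-p(t^{n})\Vert_{\cHQ}^{2}$, with the initial errors $\Vert p^{\ell}-p(t^{\ell})\Vert_{\cHQ}$, $\ell=0,\ldots,\numBDF-1$, collected on the right-hand side.

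It remains to reconstruct $u$. From the first discrete equation~\eqref{eq:ellpar:opt:delayK:BDF:a} I would write $u^{n}=\cA^{-1}\bigl(f^{n}+\cD^{*}\delayOperKdisc{\numDelay}(p^{n})\bigr)$ and compare with $u(t^{n})=\cA^{-1}\bigl(f(t^{n})+\cD^{*}p(t^{n})\bigr)$ coming from~\eqref{eq:ellpar:opt:a}, so that the error equals $\cA^{-1}\cD^{*}\bigl(\delayOperKdisc{\numDelay}(p^{n})-p(t^{n})\bigr)$. Splitting this as $\delayOperKdisc{\numDelay}(p^{n})-\delayOperKdisc{\numDelay}(p(t^{n}))$ plus $\delayOperKdisc{\numDelay}(p(t^{n}))-p(t^{n})$, the first term is bounded in $\cHQ$ by $\bar{c}_{\numDelay}\max_{1\le\ell\le\numDelay}\Vert p^{n-\ell}-p(t^{n-\ell})\Vert_{\cHQ}$, i.e.\ by the pressure error of the previous steps (shifted indices being absorbed into the same right-hand side), while the second term is the Lagrange interpolation defect, bounded by $\lesssim\tau^{\numDelay}=\tau^{\numBDF}$ via \Cref{lem:pTaylor}. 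Using the boundedness of $\cA^{-1}\cD^{*}\colon\cHQ\to\cV$ (which follows from the boundedness of $\cD$ and the invertibility of $\cA$) and squaring yields the asserted estimate for $\Vert u^{n}-u(t^{n})\Vert_{\cV}^{2}$.

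The genuinely substantive part of the whole development is not this assembly but the verification of \Cref{ass:summation:bdf} in \Cref{subsec:assVeri}: producing the $\gamma$-vectors and certifying positive definiteness of $G(\mu)$ on the respective intervals becomes algebraically unwieldy as $\numBDF$ grows, which is why the theorem stops at $\numBDF=3$ and the last case is closed numerically. Once the admissible intervals $[0,1]$, $[0,\tfrac13]$, $[0,\tfrac17]$ are known, proving \Cref{th:mainResult} is bookkeeping; within it the only point that deserves care is the $u$-reconstruction, since $u^{n}$ depends on several delayed pressure values rather than on $p^{n}$ alone, so the interpolation defect $\delayOperKdisc{\numDelay}(p(t^{n}))-p(t^{n})$ must be tracked at exactly the order $\tau^{\numBDF}$ that matches the pressure error.
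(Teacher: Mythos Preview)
Your proposal is correct and follows exactly the assembly the paper intends: the theorem is stated as a summary of \Cref{subsec:assVeri} (verifying \Cref{ass:summation:bdf,ass:summation:bdf:niceEigvals} on the three intervals), \Cref{thm:BDFdelay}, and \Cref{prop:distanceDelay}, combined via the triangle inequality as in \Cref{rem:errorBalance}. The paper does not spell out a separate proof for \Cref{th:mainResult}, and in particular does not detail the $u$-reconstruction; your argument via $\cA^{-1}\cD^{*}$ and the splitting $\delayOperKdisc{\numDelay}(p^{n})-p(t^{n})=\bigl(\delayOperKdisc{\numDelay}(p^{n})-\delayOperKdisc{\numDelay}(p(t^{n}))\bigr)+\bigl(\delayOperKdisc{\numDelay}(p(t^{n}))-p(t^{n})\bigr)$ together with \Cref{lem:pTaylor} is a clean way to fill in that gap.
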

Note that the results are in line with the numerical
observations in~\cite{AltMU24}. 
A numerical verification of the derived convergence results is subject of the following section. 
%
%
%
\section{Numerical Experiment}
\label{sec:numerics}

On the unit square~$\Omega=(0,1)^2$ and $t\in[0,10]$, we set as exact solution 
\begin{align*}
	u(t,x,y) = -10 \mathrm{e}^{-\tfrac{5}{21} t}
	\begin{bmatrix}\cos(\pi x) \sin(\pi y)\\\sin(\pi x) \cos(\pi y)\end{bmatrix},
	\qquad p(t,x,y) = 10 \mathrm{e}^{-\tfrac{5}{21} t}\sin(\pi x) \sin(\pi y) 
\end{align*}
and define the right-hand sides accordingly. 
\begin{table}
	\centering
	\caption{Poroelasticity parameters for the numerical study in \Cref{sec:numerics}.}
	\label{tab:poroparam}
	\begin{tabular}{@{}c@{\qquad\quad}c@{\qquad\quad}c@{\qquad\quad}c@{\qquad\quad}c@{}}
		\toprule
		$\lambda$ & $\mu$ & $\tfrac{\kappa}{\nu}$ & $M$ & $\alpha$\\
		\midrule
		$0.5$ & $0.125$ & $0.05$ & $0.27$ & $0.5$\\
		\bottomrule
	\end{tabular}
\end{table}
The poroelasticity parameters are chosen as in \Cref{tab:poroparam},
which results in a coupling strength of $\omega \approx 0.11 < 1/7$.
A (uniform) mesh size of $2^{-7}$ is chosen for the spatial discretization.
To make the errors in time dominate, it is reasonable to consider
$(P_3,P_2)$ finite elements for $(u,p)$.
Note that the polynomial degree for $u$ is one higher than for $p$ as suggested in~\cite{ErnM09}.
For customary verification of the convergence result,
the errors from the monolithic and the corresponding decoupled methods at the final time are plotted in \Cref{fig:errorComparToy}, showing the
expected convergence rates.
\begin{figure}[ht]
	\hspace{-2em}
	\centering
	\begin{subfigure}[t]{.45\linewidth}
		\begin{tikzpicture}

  \begin{loglogaxis}[
    width=2.8in,
    height=2.8in,
    xmin=6.0e-02, xmax=2.0e-00,
    ymin=8.0e-7, ymax=5.0e-01,
    xtick={1.25, 0.625, 0.3125, 0.15625, 0.078125, 0.0390625},
    xticklabels={$2^{-3}$,$2^{-4}$,$2^{-5}$,$2^{-6}$,$2^{-7}$, $2^{-8}$},
    yticklabels={},
    xlabel={step size $\tau \times 10$},
    ylabel={$\Vert u(T) - u^{N} \Vert_{\cV}$},
    ylabel style={at={(-0.05,0.5)}},
    xmajorgrids,
    ymajorgrids,
    cycle list name=methodcompare,
    ]
    \addplot+
    table[x={tau}, y={EPsBDF1}] {u_H1.dat};
    \addplot+
    table[x={tau}, y={EPsBDF2}] {u_H1.dat};
    \addplot+
    table[x={tau}, y={EPsBDF3}] {u_H1.dat};
    \addplot+
    table[x={tau}, y={EPiBDF1}] {u_H1.dat};
    \addplot+
    table[x={tau}, y={EPiBDF2}] {u_H1.dat};
    \addplot+
    table[x={tau}, y={EPiBDF3}] {u_H1.dat};
    \addplot
    table[x={tau}, y={ORDER1}] {u_H1.dat};
    \addplot
    table[x={tau}, y={ORDER2}] {u_H1.dat};
    \addplot
    table[x={tau}, y={ORDER3}] {u_H1.dat};
  \end{loglogaxis}
\end{tikzpicture}
  
	\end{subfigure}
	\begin{subfigure}[t]{.45\linewidth}
		\begin{tikzpicture}

  \begin{loglogaxis}[
    width=2.8in,
    height=2.8in,
    xmin=6.0e-02, xmax=2.0e-00,
  	ymin=8.0e-7, ymax=5.0e-01,
    xtick={1.25, 0.625, 0.3125, 0.15625, 0.078125, 0.0390625},
    xticklabels={$2^{-3}$,$2^{-4}$,$2^{-5}$,$2^{-6}$,$2^{-7}$, $2^{-8}$},
    yticklabel style={at={(-0.7,0.5)}},
    xlabel={step size $\tau \times 10$},
	  ylabel={$\Vert p(T) - p^{N}\Vert_{\cHQ}$},
    ylabel style={at={(1.15,0.5)}},
    xmajorgrids,
    ymajorgrids,
    legend cell align={left},
    legend style={
      at={(0.5,-0.1)},
      anchor=north,
      /tikz/every even column/.append style={column sep=0.5cm},
      font=\small,
    },
    legend columns=3,
    legend to name=legToy, 
    cycle list name=methodcompare,
    ]
  \addplot+
  table[x={tau}, y={EPsBDF1}] {p_L2.dat};
  \addlegendentry{BDF-$1$ (semi-explicit)}
  \addplot+
  table[x={tau}, y={EPsBDF2}] {p_L2.dat};
  \addlegendentry{BDF-$2$ (semi-explicit)}
  \addplot+
  table[x={tau}, y={EPsBDF3}] {p_L2.dat};
  \addlegendentry{BDF-$3$ (semi-explicit)}
  \addplot+
  table[x={tau}, y={EPiBDF1}] {p_L2.dat};
  \addlegendentry{BDF-$1$}
  \addplot+
  table[x={tau}, y={EPiBDF2}] {p_L2.dat};
  \addlegendentry{BDF-$2$}
  \addplot+
  table[x={tau}, y={EPiBDF3}] {p_L2.dat};
  \addlegendentry{BDF-$3$}
  \addplot
  table[x={tau}, y={ORDER1}] {p_L2.dat};
  \addlegendentry{order 1}
  \addplot
  table[x={tau}, y={ORDER2}] {p_L2.dat};
  \addlegendentry{order 2}
  \addplot
  table[x={tau}, y={ORDER3}] {p_L2.dat};
  \addlegendentry{order 3}
\end{loglogaxis}
\end{tikzpicture}
	\end{subfigure}\\
	\ref*{legToy} 
	\caption{Convergence study for the numerical experiment on the unit square at time $t=T=10$ with mesh size~$h=2^{-7}$.
	The solid blue curves correspond to the semi-explicit method
	and the dashed red curves to the fully coupled methods (original \BDF schemes).
	Errors in $u$ (left) and $p$ (right).
	} 
	\label{fig:errorComparToy}
\end{figure}
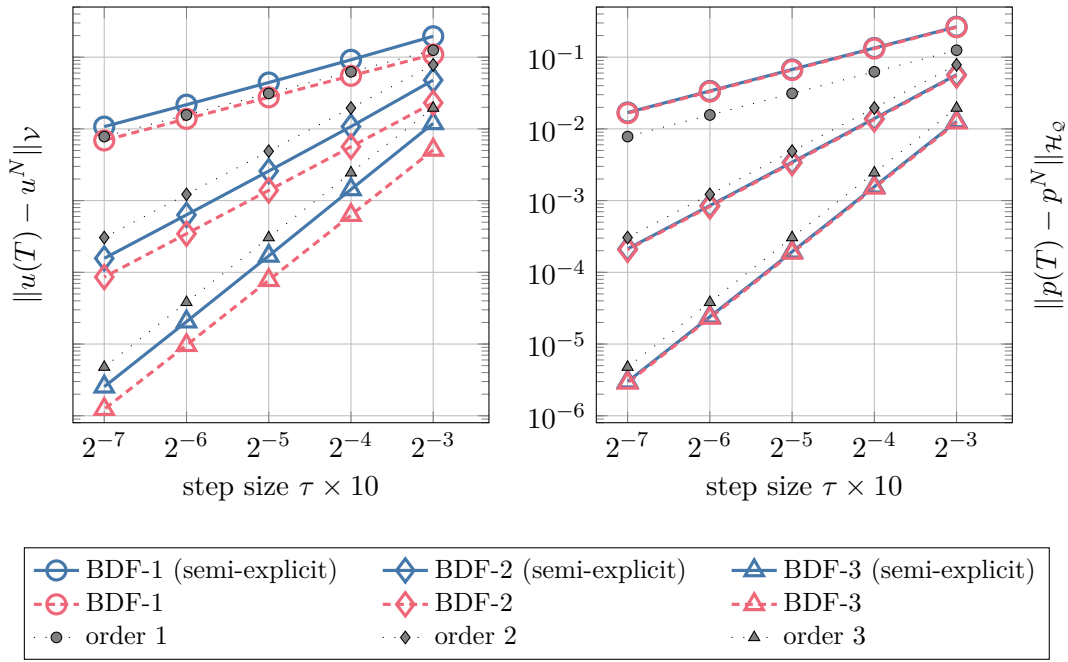
%
%
\section{Conclusions}
\label{sec:conclusion}
The convergence of semi-explicit methods based on \BDF-$k$ schemes is proven for $k\le 3$. The essential tool of the proof is the assumption on the G-stability of the schemes. In contrast to the standard G-stability concept, the inclusion of delay terms leads to a dependence on the spectrum of the involved operators of the problem. The evidence for this assumption is verified in this article. 

In future work, we aim to show that the semi-explicit schemes are A-stable for appropriate multipliers, which would directly imply G-stability. Moreover, we plan to extend the convergence proof to the semi-explicit methods based on Runge--Kutta schemes. 
%
%
\section*{Acknowledgements} 
This project is funded by the Deutsche Forschungsgemeinschaft (DFG, German Research
Foundation) - 467107679.
Parts of this work were carried out while RA and AM were affiliated with the
Institute of Mathematics, University of Augsburg.
RA was also affiliated with the Centre for Advanced Analytics and Predictive
Sciences (CAAPS).
Moreover, AM and BU acknowledge support by the Stuttgart Center for Simulation
Science (SimTech).
%
%
\bibliographystyle{plain-doi}
\bibliography{literature}

\begin{thebibliography}{10}

\bibitem{AkrCM98}
\textsc{G.~Akrivis, M.~Crouzeix, and C.~Makridakis}.
\newblock \href{http://www.jstor.org/stable/2585132}{Implicit-explicit
  multistep finite element methods for nonlinear parabolic problems}.
\newblock {\em Math. Comp.}, 67(222):457--477, 1998.

\bibitem{AltD24}
\textsc{R.~Altmann and M.~Deiml}.
\newblock \href{https://doi.org/10.1553/etna_vol60s256}{A novel iterative time
  integration scheme for linear poroelasticity}.
\newblock {\em Electron. Trans. Numer. Anal.}, 60:256--275, 2024.

\bibitem{AltD24ppt}
\textsc{R.~Altmann and M.~Deiml}.
\newblock \href{https://doi.org/10.48550/arXiv.2403.12699}{A second-order
  iterative time integration scheme for linear poroelasticity}.
\newblock {\em ArXiv e-print 2403.12699}, 2024.
\newblock to appear in SIAM J. Sci. Comput.

\bibitem{AltM22}
\textsc{R.~Altmann and R.~Maier}.
\newblock \href{https://doi.org/https://doi.org/10.1137/21M1413985}{A
  decoupling and linearizing discretization for poroelasticity with nonlinear
  permeability}.
\newblock {\em {SIAM} J. Sci. Comput.}, 44(3):B457--B478, 2022.

\bibitem{AltMU21b}
\textsc{R.~Altmann, R.~Maier, and B.~Unger}.
\newblock \href{https://doi.org/10.1090/mcom/3608}{Semi-explicit discretization
  schemes for weakly-coupled elliptic-parabolic problems}.
\newblock {\em Math. Comp.}, 90:1089--1118, 2021.

\bibitem{AltMU24}
\textsc{R.~Altmann, R.~Maier, and B.~Unger}.
\newblock \href{https://doi.org/10.1007/s10543-024-01021-0}{Semi-explicit
  integration of second order for weakly coupled poroelasticity}.
\newblock {\em {BIT} Numer. Math.}, 64:20, 2024.

\bibitem{AltMU21c}
\textsc{R.~Altmann, V.~Mehrmann, and B.~Unger}.
\newblock
  \href{https://doi.org/10.1080/13873954.2021.1975137}{Port-{H}amiltonian
  formulations of poroelastic network models}.
\newblock {\em Math. Comput. Model. Dyn. Sys.}, 27(1):429--452, 2021.

\bibitem{AltMU24b}
\textsc{R.~Altmann, A.~Mujahid, and B.~Unger}.
\newblock Higher-order iterative decoupling for poroelasticity.
\newblock {\em Adv. Comput. Math.}, 50(6):111, 2024.

\bibitem{Bel61}
\textsc{R.~Bellman}.
\newblock \href{https://doi.org/10.1016/0022-247X(61)90049-X}{On the
  computational solution of differential-difference equations}.
\newblock {\em J. Math. Anal. Appl.}, 2:108--110, 1961.

\bibitem{BelC63}
\textsc{R.~Bellman and K.~Cooke}.
\newblock
  \href{https://www.rand.org/content/dam/rand/pubs/reports/2006/R374.pdf}{{\em
  Differential-difference equations}}.
\newblock Academic Press, New York, 1963.

\bibitem{Bio41}
\textsc{M.~A. Biot}.
\newblock \href{https://doi.org/10.1063/1.1712886}{General theory of
  three-dimensional consolidation}.
\newblock {\em J. Appl. Phys}, 12(2):155--164, 1941.

\bibitem{BotBNKR17}
\textsc{J.~W. Both, M.~Borregales, J.~M. Nordbotten, K.~Kumar, and F.~A. Radu}.
\newblock \href{https://doi.org/10.1016/j.aml.2016.12.019}{Robust fixed stress
  splitting for {B}iot’s equations in heterogeneous media}.
\newblock {\em Appl. Math. Lett.}, 68:101--108, 2017.

\bibitem{Bre11}
\textsc{H.~Brezis}.
\newblock \href{https://doi.org/10.1007/978-0-387-70914-7}{{\em Functional
  analysis, {Sobolev} spaces and partial differential equations}}.
\newblock Universitext. Springer, New York, London, 2011.

\bibitem{Cro80}
\textsc{M.~Crouzeix}.
\newblock \href{https://doi.org/https://doi.org/10.1007/BF01396412}{Une
  méthode multipas implicite-explicite pour l'approximation des équations
  d'évolution paraboliques}.
\newblock {\em Numer. Math.}, 35(3):257--276, 1980.

\bibitem{Dah78}
\textsc{G.~Dahlquist}.
\newblock \href{https://doi.org/10.1007/BF01932018}{G-stability is equivalent
  to {A}-stability}.
\newblock {\em {BIT} Numer. Math.}, 18(4):384--401, 1978.

\bibitem{DetC93}
\textsc{E.~Detournay and A.~H.~D. Cheng}.
\newblock
  \href{https://doi.org/10.1016/B978-0-08-040615-2.50011-3}{Fundamentals of
  poroelasticity}.
\newblock In {\em Analysis and Design Methods}, pages 113--171. Elsevier, 1993.

\bibitem{EliRT23}
\textsc{E.~Eliseussen, M.~E. Rognes, and T.~B. Thompson}.
\newblock \href{https://doi.org/10.1051/m2an/2023033}{\textit{{A} posteriori}
  error estimation and adaptivity for multiple-network poroelasticity}.
\newblock {\em ESAIM: Math. Model. Numer. Anal.}, 57(4):1921--1952, 2023.

\bibitem{ErnM09}
\textsc{A.~Ern and S.~Meunier}.
\newblock \href{https://doi.org/10.1051/m2an:2008048}{A posteriori error
  analysis of {E}uler-{G}alerkin approximations to coupled elliptic-parabolic
  problems}.
\newblock {\em ESAIM: Math. Model. Numer. Anal.}, 43(2):353--375, 2009.

\bibitem{HaiNW09}
\textsc{E.~Hairer, S.~P. Nørsett, and G.~Wanner}.
\newblock \href{https://doi.org/https://doi.org/10.1007/978-3-540-78862-1}{{\em
  Solving Ordinary Differential Equations {I}: Nonstiff Problems}}.
\newblock Springer, Heidelberg, London, 2nd rev. ed edition, 2009.

\bibitem{HaiW96}
\textsc{E.~Hairer and G.~Wanner}.
\newblock \href{https://doi.org/10.1007/978-3-642-05221-7}{{\em Solving
  Ordinary Differential Equations {II}: Stiff and Differential--Algebraic
  Problems}}.
\newblock Springer Berlin Heidelberg, Berlin, Heidelberg, 1996.

\bibitem{KimTJ11a}
\textsc{J.~Kim, H.A. Tchelepi, and R.~Juanes}.
\newblock \href{https://doi.org/10.1016/j.cma.2011.02.011}{Stability and
  convergence of sequential methods for coupled flow and geomechanics: Drained
  and undrained splits}.
\newblock {\em Comput. Meth. Appl. Mech. Eng.}, 200(23):2094--2116, 2011.

\bibitem{KimTJ11b}
\textsc{J.~Kim, H.A. Tchelepi, and R.~Juanes}.
\newblock \href{https://doi.org/10.1016/j.cma.2010.12.022}{Stability and
  convergence of sequential methods for coupled flow and geomechanics:
  Fixed-stress and fixed-strain splits}.
\newblock {\em Comput. Meth. Appl. Mech. Eng.}, 200(13):1591--1606, 2011.

\bibitem{MikW13}
\textsc{A.~Mikeli\'{c} and M.~F. Wheeler}.
\newblock \href{https://doi.org/10.1007/s10596-012-9318-y}{Convergence of
  iterative coupling for coupled flow and geomechanics}.
\newblock {\em Comput. Geosci.}, 17(3):455--461, 2013.

\bibitem{Muj22}
\textsc{A.~Mujahid}.
\newblock \href{https://doi.org/10.14464/gammas.v4i1.500}{Monolithic,
  non-iterative and iterative time discretization methods for linear coupled
  elliptic-parabolic systems}.
\newblock {\em GAMM Archive for Students}, 4(1), 2022.

\bibitem{NevO81}
\textsc{O.~Nevanlinna and F.~Odeh}.
\newblock \href{https://doi.org/10.1080/01630568108816097}{Multiplier
  techniques for linear multistep methods}.
\newblock {\em Numer. Funct. Anal. Optim.}, 3(4):377--423, 1981.

\bibitem{Sho00}
\textsc{R.~E. Showalter}.
\newblock \href{https://doi.org/10.1006/jmaa.2000.7048}{Diffusion in
  poro-elastic media}.
\newblock {\em J. Math. Anal. Appl.}, 251(1):310--340, 2000.

\bibitem{StoBKNR19}
\textsc{E.~Storvik, J.~W. Both, K.~Kumar, J.~M. Nordbotten, and F.~A. Radu}.
\newblock \href{https://doi.org/https://doi.org/10.1002/nme.6130}{On the
  optimization of the fixed-stress splitting for {B}iot's equations}.
\newblock {\em Internat. J. Numer. Methods Engrg.}, 120(2):179--194, 2019.

\bibitem{TreU19}
\textsc{S.~Trenn and B.~Unger}.
\newblock \href{https://doi.org/10.1109/CDC40024.2019.9030146}{Delay regularity
  of differential-algebraic equations}.
\newblock In {\em Proc. 58th IEEE Conf. Decision Control (CDC) 2019, Nice,
  France}, pages 989--994, 2019.

\bibitem{Ung18}
\textsc{B.~{Unger}}.
\newblock \href{https://doi.org/10.13001/1081-3810.3759}{Discontinuity
  propagation in delay differential-algebraic equations}.
\newblock {\em Electron. J. Linear Algebra}, 34:582--601, 2018.

\bibitem{Ung20b}
\textsc{B.~Unger}.
\newblock \href{https://doi.org/10.14279/depositonce-10707}{{\em
  {Well-Posedness and Realization Theory for Delay Differential-Algebraic
  Equations}}}.
\newblock Dissertation, Technische Universit{\"{a}}t Berlin, 2020.

\bibitem{VarCT+16}
\textsc{J.~C. Vardakis, D.~Chou, B.~J. Tully, C.~C. Hung, T.~H. Lee, P.~H.
  Tsui, and Y.~Ventikos}.
\newblock \href{https://doi.org/10.1016/j.medengphy.2015.09.006}{Investigating
  cerebral oedema using poroelasticity}.
\newblock {\em Med. Eng. Phys.}, 38(1):48--57, 2016.

\bibitem{Zei90}
\textsc{E.~Zeidler}.
\newblock \href{https://doi.org/10.1007/978-1-4612-0985-0}{{\em Nonlinear
  functional analysis and its applications. {2A}: {Linear} monotone
  operators}}.
\newblock Springer, New York, Berlin, Heidelberg, 1990.

\bibitem{Zob10}
\textsc{M.~D. Zoback}.
\newblock \href{https://doi.org/https://doi.org/10.1017/CBO9780511586477}{{\em
  Reservoir geomechanics}}.
\newblock Cambridge University Press, Cambridge, 2010.

\end{thebibliography}
%
%
\appendix
\section{Auxiliary Results}
Within the analysis of the elliptic--parabolic system of \Cref{sec:prelim:delay} we use the following approximation property of the delay approach. 
\begin{lemma}
\label{lem:pTaylor}
Consider ${\bar p} \in C^{\numDelay+1}([-\numDelay\tau,T];\cHQ)$
and $\widehat{p}(\,\cdot\,;\tau) = \sum_{\ell=1}^{\numDelay}
c_{\numDelay,\ell}\shift{\ell\tau}{{\bar p}}$.
Then,
\begin{displaymath}
	{\bar p}(t) - {\widehat p}(t;\tau) 
	= R_T^{\numDelay, \numDelay}
	= \cO(\tau^{\numDelay}) \qquad\text{and}\qquad 
	{\dot {\bar p}}(t) - {\dot {\widehat p}}(t;\tau)
	= R_T^{\numDelay, \numDelay+1}
	= \cO(\tau^{\numDelay})
\end{displaymath}
with remainder $R_T^{\numDelay, k} = \frac{1}{(\numDelay-1)!}\sum_{\ell=1}^{\numDelay}(-1)^{(\ell-1)}\binom{\numDelay}{\ell}\int_{t-\ell\tau}^{t}(t -  \ell\tau - \zeta)^{\numDelay-1}{\bar p}^{(k)}(\zeta)\dzeta$. 
\end{lemma}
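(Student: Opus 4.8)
The plan is to reduce the statement to a Taylor expansion of the shifted terms together with an elementary binomial identity. First I would fix $t$ and expand each delayed value $\shift{\ell\tau}{\bar p}(t)=\bar p(t-\ell\tau)$ about $t$ with integral remainder, using that $\bar p$ is $\cHQ$-valued and $C^{\numDelay+1}$ in time (for the first identity, $C^{\numDelay}$ already suffices):
\begin{equation*}
	\bar p(t-\ell\tau)=\sum_{j=0}^{\numDelay-1}\frac{(-\ell\tau)^{j}}{j!}\,\bar p^{(j)}(t)+r_{\ell},\qquad r_{\ell}=\frac{1}{(\numDelay-1)!}\int_{t}^{t-\ell\tau}(t-\ell\tau-\zeta)^{\numDelay-1}\,\bar p^{(\numDelay)}(\zeta)\dzeta.
\end{equation*}
Multiplying by $c_{\numDelay,\ell}=(-1)^{\ell-1}\binom{\numDelay}{\ell}$, summing over $\ell=1,\dots,\numDelay$, and invoking
\begin{equation*}
	\sum_{\ell=1}^{\numDelay}(-1)^{\ell-1}\binom{\numDelay}{\ell}\,\ell^{j}=\begin{cases}1,&j=0,\\[0.2em]0,&1\le j\le \numDelay-1,\end{cases}
\end{equation*}
which restates the fact that the $\numDelay$-th forward difference annihilates polynomials of degree below $\numDelay$ (and follows from repeated use of the binomial theorem), all monomial contributions with $1\le j\le\numDelay-1$ cancel, while the $j=0$ term reproduces $\bar p(t)$. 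Hence $\widehat p(t;\tau)-\bar p(t)$ equals exactly the $c_{\numDelay,\ell}$-weighted sum of the remainders $r_{\ell}$.

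The second step is to recast this weighted sum of remainders into the closed form of the statement. A change of variables $\zeta=t-\sigma$, $\sigma\in[0,\ell\tau]$, writes each $r_\ell$ as an integral over $[t-\ell\tau,t]$ of $\bar p^{(\numDelay)}$ against a degree-$(\numDelay-1)$ polynomial weight; collecting the $c_{\numDelay,\ell}$ and reorganizing the integrals (possibly after an integration by parts to obtain the weight exactly as $(t-\zeta)^{\numDelay-1}$) then delivers $R_T^{\numDelay,\numDelay}$, up to the overall sign in the statement. The $\cO(\tau^{\numDelay})$ bound needs nothing beyond $0\le t-\zeta\le \ell\tau\le\numDelay\tau$ on the integration interval, which yields
\begin{equation*}
	\|\bar p(t)-\widehat p(t;\tau)\|_{\cHQ}\le C_{\numDelay}\,\tau^{\numDelay}\,\|\bar p^{(\numDelay)}\|_{L^{\infty}(t-\numDelay\tau,\,t;\,\cHQ)},
\end{equation*}
with $C_{\numDelay}$ depending only on $\numDelay$. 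As an independent cross-check, the same rate follows from the Hermite--Genocchi form of the interpolation error, since $\widehat p(t;\tau)$ is the value at $t$ of the degree-$(\numDelay-1)$ polynomial interpolating $\bar p$ at the nodes $t-\tau,\dots,t-\numDelay\tau$, and $\prod_{\ell=1}^{\numDelay}\bigl(t-(t-\ell\tau)\bigr)=\numDelay!\,\tau^{\numDelay}$.

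The derivative identity comes for free: since the $c_{\numDelay,\ell}$ are constants and the shift commutes with $\ddt$, we have $\ddt\widehat p(t;\tau)=\sum_{\ell=1}^{\numDelay}c_{\numDelay,\ell}\,\dot{\bar p}(t-\ell\tau)$, i.e.\ $\dot{\widehat p}(\,\cdot\,;\tau)$ is precisely the delay approximation built from $\dot{\bar p}$. Applying the part already proved with $\bar p$ replaced by $\dot{\bar p}\in C^{\numDelay}([-\numDelay\tau,T];\cHQ)$ (here the hypothesis $\bar p\in C^{\numDelay+1}$ is used) gives $\dot{\bar p}(t)-\dot{\widehat p}(t;\tau)=-R_T^{\numDelay,\numDelay+1}=\cO(\tau^{\numDelay})$.

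I expect the main obstacle to be the bookkeeping in the second step: steering the $c_{\numDelay,\ell}$-weighted sum of Taylor remainders into the exact expression $R_T^{\numDelay,\numDelay}$ with the correct overall sign -- the precise form that \Cref{prop:distanceDelay} later invokes -- rather than settling for a merely equivalent $\cO(\tau^{\numDelay})$ quantity. The binomial identity and the order estimate are routine once that is in place.
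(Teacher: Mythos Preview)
Your plan is exactly the paper's: Taylor-expand each $\bar p(t-\ell\tau)$ about $t$ with integral remainder, kill the polynomial part via the identity $\sum_{\ell=1}^{\numDelay}(-1)^{\ell-1}\binom{\numDelay}{\ell}\ell^{j}=0$ for $1\le j\le\numDelay-1$ (the paper derives this by applying $(x\tfrac{d}{dx})^{j}$ to $(1+x)^{\numDelay}$ and evaluating at $x=-1$; your forward-difference justification is equivalent), and then read off the remainder. The derivative identity is handled precisely as you say, by applying the first part to $\dot{\bar p}$.

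Your ``main obstacle'' is well spotted, but do not try to resolve it by integration by parts: the exact expression for $R_T^{\numDelay,k}$ as printed cannot be reached from the correct Taylor remainder. The paper simply asserts the remainder of $\bar p(t-\ell\tau)$ to be $\tfrac{1}{(\numDelay-1)!}\int_{t-\ell\tau}^{t}(t-\zeta)^{\numDelay-1}\bar p^{(\numDelay)}(\zeta)\dzeta$, i.e.\ with weight $(t-\zeta)^{\numDelay-1}$ in place of your correct $(t-\ell\tau-\zeta)^{\numDelay-1}$; these two integrals are not equal in general (already for $\numDelay=1$ they differ by a sign, and for $\numDelay=2$ they disagree whenever $\bar p''$ is non-constant). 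What \Cref{prop:distanceDelay} actually uses downstream is only the crude bound obtained from $|t-\zeta|\le\ell\tau$ on the integration interval, and that $\cO(\tau^{\numDelay})$ estimate holds equally for your remainder since $|t-\ell\tau-\zeta|\le\ell\tau$ there as well. So your proof of the order claim is complete as written; keep your own remainder expression and note the discrepancy rather than trying to steer into the stated one.
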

\begin{proof}
	The Taylor series expansion of $\shift{\ell\tau}{{\bar p}}$ around $t$ is
	\begin{align*}
		\shift{\ell\tau}{{\bar p}}(t) 
		= {\bar p}(t-\ell\tau)
		= \sum_{i=0}^{\numDelay-1}\frac{(-\ell\tau)^{i}}{i!}{\bar p}^{(i)}(t)
		- \frac{1}{(\numDelay-1)!}\int_{t-\ell\tau}^{t}(t -\ell\tau - \zeta)^{\numDelay-1}
		{\bar p}^{(\numDelay)}(\zeta)\dzeta.
	\end{align*}
	Substituting this in~\eqref{eq:p:approx:lagpol}, we obtain
	\begin{align}
		\label{eq:rem:term}
		{\widehat p}(t;\tau) 
		= \sum_{\ell=1}^{\numDelay}c_{\numDelay,\ell}\shift{\ell\tau}{{\bar p}}(t)
		= \sum_{i=0}^{\numDelay-1}
		\bigg(\sum_{\ell=1}^{\numDelay} \ell^{i}(-1)^{(\ell-1)}
		\binom{\numDelay}{\ell}\bigg)\frac{(-\tau)^{i}}{i!}{\bar p}^{(i)}(t)
		- R_T^{\numDelay, \numDelay},
	\end{align}
	where
	\begin{align}
		\label{eq:rem:1}
		R_T^{\numDelay, \numDelay} =
		\frac{1}{(\numDelay-1)!}\sum_{\ell=1}^{\numDelay}(-1)^{(\ell-1)}\binom{\numDelay}{\ell}\int_{t-\ell\tau}^{t}(t - \ell\tau - \zeta)^{\numDelay-1}{\bar p}^{(\numDelay)}(\zeta)\dzeta.
	\end{align}
	The first summand in~\eqref{eq:rem:term}, i.e., for $i=0$, we obtain
	\begin{displaymath}
		\sum_{\ell=1}^{\numDelay}\binom{\numDelay}{\ell}\, (-1)^{(\ell-1)} {\bar p}(t) 
		= \big(-(1-1)^\numDelay + 1\big)\, {\bar p}(t) 
		= {\bar p}(t).
	\end{displaymath}
	Observe that for $0<i<\numDelay$, we can obtain the inner sum in \eqref{eq:rem:term} by substituting $x=-1$ in 
	\begin{align*}
		\sum_{\ell=1}^{\numDelay}\binom{\numDelay}{\ell}\,\ell^i\, x^{(\ell-1)} &= \frac{1}{x}\Big[x \ddx (\,\cdot\,)\Big]^{i}\big((1+x)^\numDelay - 1\big) = \frac{1}{x}\Big[(1+x) \ddx (\,\cdot\,) - \ddx (\,\cdot\,)\Big]^{i}(1+x)^\numDelay\nonumber\\
		&=\frac{1}{x}\sum_{\ell=0}^{i}\binom{i}{\ell}\bigg(\Big[(1+x)\ddx(\,\cdot\,)\Big]^{i-\ell}(1+x)^{\numDelay}\bigg)\bigg(\Big[-\ddx(\,\cdot\,)\Big]^{\ell}(1+x)^{\numDelay}\bigg)\nonumber\\
		&=\frac{1}{x}\sum_{\ell=0}^{i}\binom{i}{\ell}\bigg(\numDelay^{(i-\ell)}(1+x)^{\numDelay}\bigg)\bigg((-1)^{\ell}\binom{\numDelay}{\ell}(1+x)^{\numDelay-\ell}\bigg), 
	\end{align*}
	which gives $0$ for all $0<i<\numDelay$. 
	Therefore, ${\bar p}(t) - {\widehat p}(t;\tau) = R_T^{\numDelay, \numDelay} = \cO(\tau^{\numDelay})$.
	Similarly, it can be shown that ${\dot {\bar p}}(t) - {\dot {\widehat p}}(t;\tau) = R_T^{\numDelay, \numDelay+1} = \cO(\tau^{\numDelay})$ with 
	\begin{align*}
		R_T^{\numDelay, \numDelay+1} = \frac{1}{(\numDelay-1)!}\sum_{\ell=1}^{\numDelay}(-1)^{(\ell-1)}\binom{\numDelay}{\ell}\int_{t-\ell\tau}^{t} (t - \ell\tau - \zeta)^{\numDelay-1}{\bar p}^{(\numDelay+1)}(\zeta)\dzeta.
		\qquad
		\qedhere
	\end{align*}
\end{proof}

\section{\texorpdfstring{$G$ matrices}{G matrices}}
In this section, we gather expressions of the $G$ matrix for the cases $\numBDF=2$ and $\numBDF=3$.

\subsection{\texorpdfstring{Case $\numBDF=\numDelay=2$}{Case k=d=2}}
\label{subsec:k2d2:G}
The $G$ matrix is given by
\begin{align*}
	G &= \left[\begin{matrix}
		\frac{3}{2} & -1 & \frac{1}{4} & 0\\
		-1 & \frac{3}{2} & -1 & \frac{1}{4}\\
		\frac{1}{4} & -1 & \frac{3}{2} & -1\\
		0 & \frac{1}{4} & -1 & \frac{3}{2}
	\end{matrix}\right]
	+ \mu \left[\begin{matrix}
		0 & \frac{3}{2} & - \frac{11}{4} & \frac{3}{2}\\
		\frac{3}{2} & 0 & \frac{3}{2} & - \frac{11}{4}\\
		- \frac{11}{4} & \frac{3}{2} & 0 & \frac{3}{2}\\
		\frac{3}{2} & - \frac{11}{4} & \frac{3}{2} & 0
	\end{matrix}\right] \nonumber\\
	&\quad+\eta\left[\begin{matrix}
		0 & - \frac{3}{4} & 0 & 0\\
		- \frac{3}{4} & 2 & -1 & 0\\
		0 & -1 & 2 & -1\\
		0 & 0 & -1 & 2
	\end{matrix}\right]
	+\eta\mu \left[\begin{matrix}
		0 & 0 & 0 & 0\\
		0 & -3 & \frac{11}{4} & - \frac{3}{2}\\
		0 & \frac{11}{4} & -3 & \frac{11}{4}\\
		0 & - \frac{3}{2} & \frac{11}{4} & -3
	\end{matrix}\right]
	- L L^{\top},
\end{align*}
where
\begin{equation*}
	L = \left[\begin{matrix}
		\gamma_{0} & 0 & 0 & 0\\
		\gamma_{1} & \gamma_{0} & 0 & 0\\
		\gamma_{2} & \gamma_{1} & \gamma_{0} & 0\\
		\gamma_{3} & \gamma_{2} & \gamma_{1} & \gamma_{0}
	\end{matrix}\right].
\end{equation*}

\subsection{\texorpdfstring{Case $\numBDF=\numDelay=3$}{Case k=d=3}}
The $G$ matrix is given by
\begin{align}
	\label{eqn:k3d3:G}
	G &= \left[\begin{matrix}
		\tfrac{11}{6} & - \tfrac{3}{2} & \tfrac{3}{4} & - \tfrac{1}{6} & 0 & 0\\
		- \tfrac{3}{2} & \tfrac{11}{6} & - \tfrac{3}{2} & \tfrac{3}{4} & - \tfrac{1}{6} & 0\\
		\tfrac{3}{4} & - \tfrac{3}{2} & \tfrac{11}{6} & - \tfrac{3}{2} & \tfrac{3}{4} & - \tfrac{1}{6}\\
		- \tfrac{1}{6} & \tfrac{3}{4} & - \tfrac{3}{2} & \tfrac{11}{6} & - \tfrac{3}{2} & \tfrac{3}{4}\\
		0 & - \tfrac{1}{6} & \tfrac{3}{4} & - \tfrac{3}{2} & \tfrac{11}{6} & - \tfrac{3}{2}\\
		0 & 0 & - \tfrac{1}{6} & \tfrac{3}{4} & - \tfrac{3}{2} & \tfrac{11}{6}
	\end{matrix}\right] 
	+\mu\left[\begin{matrix}
		0 & \tfrac{11}{4} & - \tfrac{29}{4} & \tfrac{23}{3} & - \tfrac{17}{4} & \tfrac{5}{4}\\
		\tfrac{11}{4} & 0 & \tfrac{11}{4} & - \tfrac{29}{4} & \tfrac{23}{3} & - \tfrac{17}{4}\\
		- \tfrac{29}{4} & \tfrac{11}{4} & 0 & \tfrac{11}{4} & - \tfrac{29}{4} & \tfrac{23}{3}\\
		\tfrac{23}{3} & - \tfrac{29}{4} & \tfrac{11}{4} & 0 & \tfrac{11}{4} & - \tfrac{29}{4}\\
		- \tfrac{17}{4} & \tfrac{23}{3} & - \tfrac{29}{4} & \tfrac{11}{4} & 0 & \tfrac{11}{4}\\
		\tfrac{5}{4} & - \tfrac{17}{4} & \tfrac{23}{3} & - \tfrac{29}{4} & \tfrac{11}{4} & 0
	\end{matrix}\right] \nonumber\\
	&\quad+ \eta \left[\begin{matrix}
		0 & - \tfrac{11}{12} & 0 & 0 & 0 & 0\\
		- \tfrac{11}{12} & 3 & - \tfrac{5}{3} & \tfrac{1}{6} & 0 & 0\\
		0 & - \tfrac{5}{3} & 3 & - \tfrac{5}{3} & \tfrac{1}{6} & 0\\
		0 & \tfrac{1}{6} & - \tfrac{5}{3} & 3 & - \tfrac{5}{3} & \tfrac{1}{6}\\
		0 & 0 & \tfrac{1}{6} & - \tfrac{5}{3} & 3 & - \tfrac{5}{3}\\
		0 & 0 & 0 & \tfrac{1}{6} & - \tfrac{5}{3} & 3
	\end{matrix}\right] 
	+\eta\mu \left[\begin{matrix}
		0 & 0 & 0 & 0 & 0 & 0\\
		0 & - \tfrac{11}{2} & \tfrac{29}{4} & - \tfrac{23}{3} & \tfrac{17}{4} & - \tfrac{5}{4}\\
		0 & \tfrac{29}{4} & - \tfrac{11}{2} & \tfrac{29}{4} & - \tfrac{23}{3} & \tfrac{17}{4}\\
		0 & - \tfrac{23}{3} & \tfrac{29}{4} & - \tfrac{11}{2} & \tfrac{29}{4} & - \tfrac{23}{3}\\
		0 & \tfrac{17}{4} & - \tfrac{23}{3} & \tfrac{29}{4} & - \tfrac{11}{2} & \tfrac{29}{4}\\
		0 & - \tfrac{5}{4} & \tfrac{17}{4} & - \tfrac{23}{3} & \tfrac{29}{4} & - \tfrac{11}{2}
	\end{matrix}\right]
	- L L^{\top},
\end{align}
where
\begin{equation*}
	L = \left[\begin{matrix}
		\gamma_{0} & 0 & 0 & 0 & 0 & 0\\
		\gamma_{1} & \gamma_{0} & 0 & 0 & 0 & 0\\
		\gamma_{2} & \gamma_{1} & \gamma_{0} & 0 & 0 & 0\\
		\gamma_{3} & \gamma_{2} & \gamma_{1} & \gamma_{0} & 0 & 0\\
		\gamma_{4} & \gamma_{3} & \gamma_{2} & \gamma_{1} & \gamma_{0} & 0\\
		\gamma_{5} & \gamma_{4} & \gamma_{3} & \gamma_{2} & \gamma_{1} & \gamma_{0}
	\end{matrix}\right].
\end{equation*}

\end{document}